\documentclass[12pt]{amsart}
\usepackage{ amssymb }
\usepackage{a4wide,xcolor}
\usepackage[alphabetic]{amsrefs}
\usepackage[T1,T2A]{fontenc}
\usepackage[utf8]{inputenc}
\usepackage{url,hyperref}
\usepackage{amsfonts}
\usepackage{mathtools}
\usepackage{needspace}
\usepackage[pdftex]{graphicx}
\usepackage{datetime}
\usepackage{epigraph}
\usepackage{verbatim}
\usepackage{mathtools}
\usepackage{enumerate}
\usepackage[american]{babel}
\numberwithin{equation}{section}

\newcommand{\Z}{\mathbb{Z}}

\newcommand{\N}{\mathbb{N}}
\newcommand{\R}{\mathbb{R}}

\newcommand{\Cm}{\mathbb{C}}

\newcommand{\eps}{\varepsilon}

\DeclareMathOperator{\supp}{supp} 

\newcommand{\Fo}{\mathfrak{F}}
\newcommand{\B}{\mathcal{B}}
\newcommand{\F}{\mathcal{F}}

\renewcommand{\phi}{\varphi}

\newtheorem{Thm}{Theorem}[section]
\newtheorem{theorem}[Thm]{Theorem}

\newtheorem{lemma}[Thm]{Lemma}
\newtheorem{proposition}[Thm]{Proposition}

\newtheorem{remark}[Thm]{Remark}

\begin{document}
  %\sloppy 

  \title[Sharp estimates for eigenvalues of localization operators before the plunge region]
  {Sharp estimates for eigenvalues of localization operators before the plunge region}
\author{Aleksei Kulikov}
\address{Aleksei Kulikov,
\newline University of Copenhagen, Department of Mathematical Sciences,
Universitetsparken 5, 2100 Copenhagen, Denmark,
\newline {\tt lyosha.kulikov@mail.ru} 
}
%\author{Fedor Nazarov}  
 %\address{Fedor Nazarov,
%\newline Department of Mathematical Sciences, Kent State University, Kent
%OH 44242, USA,
%\newline {\tt nazarov@math.kent.edu} }

  %\subjclass[2020]{Primary 30C40; Secondary 81R30, 49Q10,49R05}
  %\keywords{Uncertainty principle, Wehrl entropy, Faber--Krahn inequality, shape
%optimization}

  \begin{abstract} We study  two closely related yet different localization operators: the time-frequency localization operator to the pair of intervals $S_{I, J} = P_I \mathcal{F}^{-1} P_J\mathcal{F} P_I$ and the localization of the coherent state transform to the square $L_Q$. Eigenvalues of both of them exhibit the same phase transition: if $|I| |J| = |Q| = c$ then first $\approx c$ eigenvalues are very close to $1$, then there are $o(c)$ intermediate eigenvalues and the rest of the eigenvalues are very close to $0$. Moreover, for both of them if $n < (1-\eps)c$ for fixed $\eps > 0$ then the eigenvalues are exponentially close to $1$. The goal of this paper is to establish sharp uniform bounds on these eigenvalues when $n$ is close to $c$ and see if there is a qualitative difference between the spectrums of $S_{I, J}$ and $S_Q$. We show that for $n < c -c^{0.99}$, say, in the time-frequency localization case  we have  $-\log(1-\lambda_n(c))\asymp\frac{c-n}{\log(\frac{2c}{c-n})}$ while in the coherent state transform case we have  $-\log(1-\mu_n(c))\asymp (\sqrt{c}-\sqrt{n})^2,$ which is much smaller if $c-n = o(c)$, so there is indeed a difference between these two cases. The proofs crucially rely on the complex-analytic interpretations of these localization operators.
\end{abstract}
  \maketitle
\pagestyle{plain}
  \section{Introduction and main results}
\subsection{Time-frequency localization operator}
For a measurable set $A\subset\R^d$ by $P_A$ we denote the multiplication operator by $\chi_A$, and for a pair of sets $A, B\subset \R^d$ we define the time-frequency localization operator associated with them by $S_{A, B} = P_A \F^{-1}P_B\F P_A$, where $\F$ is the Fourier transform
$$\F f(\xi) = \hat{f}(\xi)=\int_{\R^d}f(x) e^{-2\pi i \langle x, \xi\rangle}dx.$$

For any sets $A, B$ the operator $S_{A, B}$ is self-adjoint and non-negative definite. If $A$ and $B$ have finite measures then $S_{A, B}$ is a trace class operator with ${\rm tr}(S_{A,  B}) = |A| |B|$, in particular it is compact. As such, it has a sequence of eigenvalues $$1 > \lambda_1(A, B) \ge \lambda_2(A, B)\ge \ldots \ge 0.$$
Note that the first eigenvalue is strictly less than $1$ as there are no non-zero functions $f$ such that $f$ and $\hat{f}$ have supports of finite measure -- this is known as the uncertainty principle of Benedicks and Amrein--Berthier \cite{Benedicks, AB}. The study of spectral properties of $S_{A, B}$ was initiated in a series of papers by  Landau, Pollak and Slepian \cite{SPL1, SPL2, SPL3}. There also were recent uniform estimates on the distribution of eigenvalues of $S_{A, B}$ under various assumptions about the geometry of sets $A, B$ and their boundaries, see e.g. \cite{Mar}.
\medskip

In this paper we will focus on the most basic and well-studied case where $d = 1$ and $A, B$ are two intervals on the real line. It is not hard to see using a linear change of variables that if we shift any of the sets $A$ or $B$ or replace $(A, B)$ with $(rA, r^{-1}B)$ then the eigenvalues  $\lambda_n(A, B)$ will not change (eigenfunctions will of course change under these transformations). So, for the case of two intervals the eigenvalues will depend only on the product of their lengths $c = |A| |B|$ and so we have a sequence $$1 > \lambda_1(c) > \lambda_2(c) > \ldots > 0.$$
We will be interested in the regime when the parameter $c$ is large. It turns out that in this case the eigenvalues exhibit a phase transition: first $\approx c$ of them are very close to $1$ (yet strictly less than $1$ by the aformentioned uncertainty principle), then there are only $\asymp \log c$ eigenvalues of intermediate size which constitute the so-called plunge region, and the remaining eigenvalues decay to zero very fast.

The decay region is the one which is the most relevant to applications, and the behaviour of eigenvalues in it is understood quite well. In \cite{Wid} Widom established the precise decay of the eigenvalues for fixed $c$, showing in particular that $\lambda_n(c)$ decays superexponentially in $n$, and then there was a series of works on establishing uniform estimates for the eigenvalues in this region in particular by Osipov \cite{Osi} and Bonami, Jaming and Karoui \cite{Jam}, culminating in the work of Bonami and Karoui \cite{Bon} which gave an extremely precise approximation for the eigenvalues when $n \ge c$.
\begin{theorem}[{\cite[Theorem 1]{Bon}}]\label{Bonami}
For all $n \ge c \ge 10$ we have
$$\lambda_n(c) = \exp\left(-\frac{\pi^2\left(n+\frac{1}{2}\right)}{2} \int_{\Phi\left(\frac{c}{n+\frac{1}{2}}\right)}^1 \frac{1}{tE(t)^2}dt+O(\log(n))\right),$$
where $E(t) = \int_0^1 \sqrt{\frac{1-t^2x^2}{1-x^2}}dx$ is the elliptic integral of the second kind and $\Phi$ is the inverse of the function $t\to \frac{t}{E(t)}$. 	
\end{theorem}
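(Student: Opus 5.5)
This is a cited result of Bonami and Karoui, so the plan below reconstructs their argument rather than anything proved earlier in this paper. The approach is to use the commuting differential operator. After rescaling to $I=[-1,1]$ and bandwidth parameter $c$, the operator $S_{I,J}$ commutes with the prolate spheroidal operator
$$L_c f = -\big((1-x^2)f'\big)' + (\pi c/2)^2 x^2 f.$$
Its eigenvalues $\chi_n(c)$ are simple, and its eigenfunctions are the prolate spheroidal wave functions $\psi_n$. Two things then need to be done. First, express $\lambda_n(c)$ exactly in terms of $\chi_n$ and the $\psi_n$. Second, obtain WKB-type asymptotics for $\chi_n$ that hold uniformly for $n\ge c$.

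\textbf{Step 1 (exact identity).} I would use the classical variational formula for the derivative of $\log\lambda_n$ in $c$:
$$\partial_c \log \lambda_n(c) = \frac{2\,\psi_n(1;c)^2}{c}.$$
Here $\psi_n$ is normalized in $L^2(-1,1)$. Integrating from $c$ upward to $\infty$, where $\lambda_n\to 1$, or comparing with the small-$c$ behaviour, turns the problem into estimating $\psi_n(1;c)^2$ uniformly.

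\textbf{Step 2 (phase integral).} In the regime $\chi_n>(\pi c/2)^2$, which is equivalent to $n\gtrsim c$, the Sturm--Liouville equation
$$(1-x^2)\psi''-2x\psi'+(\chi-(\pi c/2)^2x^2)\psi=0$$
is oscillatory on the whole of $(-1,1)$. The Bohr--Sommerfeld / Prüfer phase counting then gives
$$\int_0^1 \sqrt{\frac{\chi_n-(\pi c/2)^2x^2}{1-x^2}}\,dx=\frac{\pi}{2}\Big(n+\frac12\Big)+O(1/n).$$
The left-hand side equals $\sqrt{\chi_n}\,E\big(\pi c/(2\sqrt{\chi_n})\big)$. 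Setting $t=\pi c/(2\sqrt{\chi_n})$, this is the relation $t/E(t)=c/(n+\tfrac12)$ up to normalisation, so $t\approx\Phi\big(c/(n+\tfrac12)\big)$. This is exactly where $\Phi$ enters.

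\textbf{Step 3 (endpoint value and integration).} A Prüfer / Liouville–Green estimate near the regular singular point $x=1$ yields the leading-order approximation $\psi_n(1)^2\approx \sqrt{\chi_n}/E(t)$, up to factors $1+O(1/n)$ or so. Substituting this into Step 1 and changing variables from $c$ to $t$ gives
$$dc/c = \frac{dt}{t}\cdot(\text{derivative factor}),\qquad \frac{d}{dt}\frac{t}{E(t)}=\frac{E(t)-tE'(t)}{E(t)^2}.$$
The combination should collapse to the integrand $\pi^2(n+\tfrac12)/(2tE(t)^2)$, integrated in $t$ up to the value $t=1$. The point $t=1$ corresponds to $\chi_n=(\pi c/2)^2$, the transition to the plunge region. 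The remaining contribution, at $t=1$ and below, is the value $\lambda_n(c_*)$ at the critical $c_*\approx n$. One shows that $\lambda_n(c_*)$ is $e^{O(\log n)}$, for instance $\lambda_n(c_*)\ge n^{-C}$, using the trace identity together with the bound on the plunge width. The $O(\log n)$ error term also absorbs all the $1+O(1/n)$ relative errors, since these are integrated against $dc/c$ over a range that is logarithmic in $n$.

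\textbf{Main obstacle.} The hard step is Step 3: obtaining uniform control of $\psi_n(1)^2$ all the way up to the turning region $t\to1$. There the WKB approximation degenerates, because the classical turning point $x=2\sqrt{\chi}/(\pi c)$ collides with the endpoint singularity at $x=1$. I would first try an Airy/Bessel-type uniform comparison near $x=1$. Failing that, I would try monotonicity of the Prüfer angle to obtain two-sided bounds whose ratio is polynomial in $n$, which suffices for the $O(\log n)$ error.
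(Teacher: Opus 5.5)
The paper does not prove this statement; it is quoted from Bonami--Karoui, so your sketch has to stand on its own. Your architecture is reasonable: Slepian's identity $\partial_c\log\lambda_n=2\psi_n(1)^2/c$, a quantisation rule for $\chi_n$, an asymptotic for $\psi_n(1)^2$, and a separate treatment of $t\to1$. \textbf{However, the endpoint formula in Step~3 is wrong, and with it the claimed ``collapse'' to the integrand $1/(tE(t)^2)$ does not happen.}

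Check $t=0$, the Legendre limit $c\to0$, which governs $n\gg c$. There $\psi_n\to\sqrt{n+\frac12}\,P_n$, so $\psi_n(1)^2\to n+\frac12\approx\sqrt{\chi_n}$. Your formula gives $\sqrt{\chi_n}/E(0)=\frac{2}{\pi}\sqrt{\chi_n}$ instead.

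The correct Liouville--Green computation goes as follows. Put $p=1-x^2$ and $q=\chi_n-(\pi c/2)^2x^2$. The amplitude $A(pq)^{-1/4}$, normalised in $L^2(-1,1)$, has $A^2=\sqrt{\chi_n}/K(t)$. This is because $\int_0^1(pq)^{-1/2}dx=K(t)/\sqrt{\chi_n}$, where $K$ is the complete elliptic integral of the \emph{first} kind. Matching with the Bessel solution $J_0\bigl(\sqrt{2(\chi_n-(\pi c/2)^2)(1-x)}\bigr)$ at the regular singular point $x=1$ then gives $\psi_n(1)^2\approx\frac{\pi\sqrt{\chi_n}}{2K(t)}$.

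Since $E'(t)=(E(t)-K(t))/t$, your derivative factor is $\frac{d}{dt}\frac{t}{E(t)}=\frac{K(t)}{E(t)^2}$. So at fixed $n$ one has $\frac{dc}{c}=\frac{K(t)}{tE(t)}\,dt$, and
$$\frac{2\psi_n(1)^2}{c}\,dc=\frac{\pi\sqrt{\chi_n}}{tE(t)}\,dt=\frac{\pi^2(n+\frac12)}{2}\,\frac{dt}{tE(t)^2}.$$
The $K$ cancels exactly, and this cancellation is the mechanism behind the theorem. With your value the integrand is instead $\frac{\pi(n+\frac12)K(t)}{tE(t)^3}$. That is off by the factor $\frac{2K(t)}{\pi E(t)}$, which equals $\frac{2}{\pi}$ at $t=0$ and is unbounded as $t\to1$. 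For $n\gg c$ this replaces the correct leading term $2n\log(n/c)$ of $-\log\lambda_n$ by $\frac{4}{\pi}n\log(n/c)$, an error far beyond $O(\log n)$.

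\textbf{Your fallback for the turning region also fails as stated.} $\log\lambda_n$ depends linearly on $\psi_n(1)^2$ through $\int_c^\infty 2\psi_n(1;\tau)^2\,d\tau/\tau$. So two-sided bounds on $\psi_n(1)^2$ whose ratio is polynomial in $n$ give the exponent only up to a multiplicative factor, not up to an additive $O(\log n)$. What you actually need is the following.
\begin{enumerate}
\item A relative error whose integral against $2\psi_n(1)^2\,d\tau/\tau$ is $O(\log n)$. For instance, $O(1/n)$ for $t$ away from $1$, degrading at most like $\tau/(\chi_n-(\pi\tau/2)^2)$ as $t\to1$.
\item A crude absolute bound of the type $\psi_n(1)^2\lesssim n$, used only on a $\tau$-window of length $O(\log n)$ just below the transition. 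In that window $\chi_n-(\pi\tau/2)^2\lesssim\tau$ and the Bessel/WKB matching breaks down. The window then contributes $O(\log n)$, and $\lambda_n$ at its upper end is bounded below by a constant via plunge-region results such as Theorem~\ref{Slepian} plus monotonicity in $n$.
\end{enumerate}
Locating this window requires the quantisation of Step~2 uniformly up to $t=1$. There $O(1)$ accuracy suffices, since an $O(1)$ shift of $n$ changes the main term by $O(\log n)$. But your $O(1/n)$ claim is not justified near $t=1$, and even the uniform $O(1)$ bound is a genuine lemma (e.g.\ via Pr\"ufer-angle comparison). Finally, when changing variables from $c$ to $t$, do not differentiate the error in the quantisation rule. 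Instead, sandwich the integrand between the exact ones for $n\pm O(1)$, using that $\sqrt{\chi}/K(\pi\tau/(2\sqrt{\chi}))$ is increasing in $\chi$.
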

\begin{remark}
In this and some of the following cited results precise statements of the theorems might differ slightly from the published versions, as they use different normalizations of the Fourier transform.
\end{remark}
In the plunge region the first basic estimate was discovered by Slepian \cite{Sle} and rigorously proved by Landau and Widom \cite{Lan}, giving one of the ways to quantify the claim that the width of the plunge region is logarithmic.
\begin{theorem}\label{Slepian}
For a fixed $b\in \R$ we have
$$\lim_{c\to \infty} \lambda_{n(c, b)}(c) = (1+e^b)^{-1},$$
where $n(c,b) = [c + \frac{1}{\pi^2}b\log(c)]$ and $[t]$ is the largest integer not greater than $t$.
\end{theorem}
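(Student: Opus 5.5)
The plan is to follow the classical Landau--Widom strategy: convert the statement about a single eigenvalue into a statement about the eigenvalue counting function, and compute that counting function through asymptotics of traces. Normalize $I=[-1,1]$ and $J=[-\frac{c}{4},\frac{c}{4}]$, so that $|I||J|=c$. For a continuous test function $f$ on $[0,1]$ with $f(0)=f(1)=0$, the goal is the Szegő-type asymptotic
$$\operatorname{tr} f(S_{I,J}) = \frac{\log c}{\pi^2}\int_0^1 \frac{f(t)}{t(1-t)}\,dt + o(\log c), \qquad c\to\infty.$$
For $f(t)=t$ this fails, since $\operatorname{tr} S = c$ exactly. The leading term $c\,f(1)$ vanishes, however, when $f(1)=0$.

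First I treat polynomials $f(t)=t^k-t^{k+1}$, or more generally $f(t)=t^k(1-t)$. The trace $\operatorname{tr}(S^k-S^{k+1})$ is a multiple integral of products of sinc kernels over cubes. After rescaling, this is the same computation as for the truncated Wiener--Hopf operator with symbol $\chi_{[-c/4,c/4]}$ on an interval. The model problem is a convolution operator by $\chi_J$ on a half-line. There the discontinuity of the symbol produces a logarithmic term with explicit coefficient $\frac{1}{\pi^2}\int_0^1 \frac{f(t)}{t(1-t)}dt$, by the Fisher--Hartwig / Widom formula for symbols with jumps. The two endpoints of $I$ and the two endpoints of $J$ each contribute $\frac{1}{2}$ of a jump. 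Combined, this gives the coefficient $\frac{1}{\pi^2}$ in front of $\log c$ after the $2\times 2$ bookkeeping. Polynomials vanishing at $0$ and $1$ are dense in the relevant class. A weighted Weierstrass approximation then extends the asymptotic to all continuous $f$ with $|f(t)|\le C t(1-t)$: the error is controlled by $\operatorname{tr}(S-S^2)=O(\log c)$, which is itself the $k=1$ case.

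Next, take $f$ to be continuous approximations of $\chi_{(\alpha,1)}$ for $0<\alpha<1$, sandwiched from above and below. This gives the counting function
$$N_c(\alpha) := \#\{n : \lambda_n(c) > \alpha\}.$$
Combining with $\operatorname{tr}S=c$ and the fact that $\sum_n \min(\lambda_n,1-\lambda_n) = O(\log c)$, I aim to obtain
$$N_c(\alpha) = c + \frac{\log c}{\pi^2}\log\frac{1-\alpha}{\alpha} + o(\log c).$$
This step requires care: the indicator is not of the form $t(1-t)g$. I would write
$$\chi_{(\alpha,1)}(t) = t - \bigl[t-\chi_{(\alpha,1)}(t)\bigr].$$
The bracket equals $t$ on $[0,\alpha]$ and $t-1$ on $(\alpha,1]$, so it does vanish at both endpoints to first order. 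Its trace is therefore given by the asymptotic above with $f(t)/(t(1-t))$ equal to $1/(1-t)$ and $-1/t$ on the two pieces. This yields
$$\frac{\log c}{\pi^2}\bigl(-\log(1-\alpha)+\log\alpha\bigr).$$
The jump at $\alpha$ is handled by monotone sandwiching, because the limiting density is continuous there.

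Finally, the theorem follows by inversion. We have $\lambda_{n}(c)>\alpha$ iff $n\le N_c(\alpha)$. With $n=[c+\frac{b}{\pi^2}\log c]$ and $\alpha=(1+e^{b'})^{-1}$, one computes $\log\frac{1-\alpha}{\alpha}=b'$. So for $b'>b$ we get $\lambda_n>\alpha$ eventually, and for $b'<b$ the reverse. Letting $b'\to b$ gives the limit $(1+e^b)^{-1}$.

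The main obstacle is the first step: proving the logarithmic trace asymptotic with the exact constant $\frac{1}{\pi^2}$ for $\operatorname{tr}(S^k-S^{k+1})$. For this I would first try to reduce to the known Landau--Widom computation of $\operatorname{tr}(S-S^2)$ via an explicit double integral of $\operatorname{sinc}^2$, which gives $\frac{\log c}{\pi^2}+O(1)$. For higher $k$ I would localize near each corner of $I\times J$ using commutator estimates, reducing to the half-line Wiener--Hopf model, where the Mellin transform diagonalizes the operator.
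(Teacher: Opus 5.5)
The paper does not prove this statement; it quotes it from Slepian and Landau--Widom. Your outline is essentially the Landau--Widom argument: the $\frac{\log c}{\pi^2}\int_0^1\frac{f(t)}{t(1-t)}\,dt$ trace asymptotic for $f(t)=t^k(1-t)$, obtained by localizing to the four corners of $I\times J$ and diagonalizing the half-line model with the Mellin transform; extension by density; sandwiching $\chi_{(\alpha,1)}$ to get $N_c(\alpha)=c+\frac{\log c}{\pi^2}\log\frac{1-\alpha}{\alpha}+o(\log c)$; and inversion, all of which you carry out correctly. The only substantive work is the trace formula for $\operatorname{tr}(S^k-S^{k+1})$, which you rightly flag as the main obstacle and only sketch; and the density step covers $f=t(1-t)g$ with $g\in C[0,1]$ rather than every continuous $f$ with $|f|\le Ct(1-t)$, which is enough since your sandwich functions $t-u_\pm(t)$ have this form.
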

%In particular for $b = 0$ we get that $\lambda_{[c]}(c)\to \frac{1}{2}$. %This was later improved by Landau \cite{} who showed that for $c > 10$ we have $\lambda_{[c]-1}(c) > \frac{1}{2} > \lambda_{[c]+1}(c)$. 

Since the dependence on $b$ in Theorem \ref{Slepian} is exponential, if we turn to the counting function for the set $\Lambda_\eps(c) = \{n: 1-\eps > \lambda_n(c) > \eps\}$ then the dependence on $\eps$ would be logarithmic. However, since in Theorem \ref{Slepian} $b$ is required to be fixed (and the proof in \cite{Lan} does not allow for varying $b$), the question of uniform upper bounds on $|\Lambda_\eps(c)|$ is of interest. In \cite{Isr} Israel showed that for any $\eta > 0$ there exists $A_\eta$ such that for all $c > 10$ and $\eps < \frac{1}{2}$ we have $|\Lambda_\eps(c)|\le A_\eta\log^{2+\eta}(c\eps^{-1})$. Recently, this was improved to a near-optimal upper bound by Karnik, Romberg and Davenport \cite{Kar} who proved the following result.
\begin{theorem}[{\cite[Theorem 3]{Kar}}]\label{Karnik}
For all $c > 0$ and $0 < \eps < \frac{1}{2}$ we have
$$|\Lambda_\eps(c)|\le \frac{2}{\pi^2} \log(50c+25)\log\left(\frac{5}{\eps(1-\eps)}\right)+7.$$
\end{theorem}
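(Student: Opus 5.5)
The bound in Theorem \ref{Karnik} is a counting estimate for the plunge region, and I would prove it by comparing $|\Lambda_\eps(c)|$ with the trace of $S(I-S)$, where $S = S_{I,J}$ with $|I||J| = c$.

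The first step is the pointwise observation that $\lambda(1-\lambda) \ge \eps(1-\eps)$ whenever $\eps < \lambda < 1-\eps$. Summing only over those indices would give
$$|\Lambda_\eps(c)| \le \frac{{\rm tr}(S - S^2)}{\eps(1-\eps)}.$$
This produces a factor $1/\eps$ instead of $\log(1/\eps)$, so it is too weak on its own. To get the logarithmic dependence I would instead show that the eigenvalues inside $(\eps, 1-\eps)$ are distributed roughly uniformly on the logit scale $\log\frac{\lambda}{1-\lambda}$, as Theorem \ref{Slepian} suggests, with density at most about $\frac{1}{\pi^2}\log c$ per unit of $\log\frac{\lambda}{1-\lambda}$.

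The plan is the following.
\begin{enumerate}
\item Establish ${\rm tr}(S - S^2) \le \frac{2}{\pi^2}\log c + O(1)$. Write the trace as the explicit double integral $\int_I\int_{\R\setminus I} |K(x,y)|^2\,dx\,dy$ with the sinc kernel $K$ and evaluate it; this is a standard computation.
\item Use the Slepian--Pollak structure: the commuting differential operator shows that the eigenvalues are simple and that the eigenfunctions are the prolate spheroidal functions.
\item Reduce to a discrete model, approximating the interval problem by a finite Slepian matrix or a tridiagonal commuting matrix. This gives a finite matrix whose spectrum can be separated by a rank argument. Concretely, I would show that the interval $(\eps,1-\eps)$ can be split into about $\log(1/\eps)$ logit-scale dyadic pieces $(\delta, 2\delta)$. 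Each piece contains at most $C\log c$ eigenvalues, which follows from the trace bound applied to a suitably chosen function of $S$.
\end{enumerate}

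The main obstacle is getting the product $\log c\cdot\log(1/\eps)$ rather than $\log c/\eps$. For this I would apply the trace argument to $f(S)$ with $f(\lambda)$ a rational approximation of the indicator of the logit band, namely $f(\lambda) = \frac{\lambda(1-\lambda)}{(\lambda+\delta)(1-\lambda+\delta)}$. I would then bound ${\rm tr} f(S)$ using the identity $S - S^2 = P_I\F^{-1}P_J\F P_{I^c}\F^{-1}P_J\F P_I$. This is a product of two Hankel-like operators whose singular values decay geometrically, at a rate controlled by $\log c$.

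For the explicit constants $\frac{2}{\pi^2}\log(50c+25)$ and the additive $7$, I would follow the bookkeeping in \cite{Kar} rather than reprove them. I expect the geometric singular-value decay of these Hankel-type pieces (a Zolotarev/rational approximation estimate) to be the genuinely hard step.
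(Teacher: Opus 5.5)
The paper does not prove this statement: it is quoted from \cite{Kar} and used as a black box in the proof of Theorem \ref{lower time}. Judged on its own, your proposal has a genuine gap, because the one step that carries all the content is the one you defer. Put $X=P_{\R\setminus I}\F^{-1}P_J\F P_I$. Then $S-S^2=X^*X$, so the squared singular values $\sigma_j(X)^2$ are exactly the numbers $\lambda_k(c)(1-\lambda_k(c))$, arranged in decreasing order. For $0<\eps<\frac12$ we have $\lambda(1-\lambda)>\eps(1-\eps)$ if and only if $\eps<\lambda<1-\eps$. Hence $|\Lambda_\eps(c)|=\#\{j:\sigma_j(X)^2>\eps(1-\eps)\}$, and the theorem, for all $\eps$ at once, is \emph{equivalent} to the decay estimate $\sigma_j(X)^2\le 5\exp\left(-\frac{\pi^2(j-7)}{2\log(50c+25)}\right)$. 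This is also why the bound is phrased through $\eps(1-\eps)$. So ``the singular values of these Hankel-type pieces decay geometrically at a rate controlled by $\log c$'' is not an auxiliary lemma; it is the theorem. Taking it, together with the constants, from \cite{Kar} leaves nothing proved.

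The surrounding steps do not reduce the difficulty.
\begin{enumerate}
\item The trace bound only gives $C\log c/(\eps(1-\eps))$, as you note yourself.
\item Your function $f(\lambda)=\frac{\lambda(1-\lambda)}{(\lambda+\delta)(1-\lambda+\delta)}$ is not localized to a single logit band. It is at least $\frac14$ on all of $[\delta,1-\delta]$, so ${\rm tr}\,f(S)\ge\frac14|\Lambda_\delta(c)|$. By Theorem \ref{Slepian} this is not $O(\log c)$ uniformly in $\delta$. Bounding ${\rm tr}\,f(S)$ is therefore the original problem, and it cannot be extracted from ${\rm tr}(S-S^2)$.
\item The commuting differential operator and the tridiagonal matrix share eigenvectors with $S$, not eigenvalues. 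Your argument never uses them to locate the $\lambda_n$.
\item Passing from a discrete model back to $S_{I,J}$ would need a limiting argument that you do not give.
\end{enumerate}

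What a proof must supply is the decay itself. If you try the natural route
$\frac{\sin\pi c(x-y)}{\pi(x-y)}=\frac{e^{i\pi c(x-y)}-e^{-i\pi c(x-y)}}{2\pi i(x-y)}$, with $I=[-\frac12,\frac12]$ and $J=[-\frac c2,\frac c2]$, be aware of the following. The resulting Cauchy operators with kernel $\frac{1}{x-y}$ from $L^2(I)$ to $L^2(\R\setminus I)$ are not even compact, because the two sets touch at the endpoints. Near an endpoint the kernel becomes $\frac1{s+t}$, a Carleman-type operator that is non-compact by dilation invariance.

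A workable outline is:
\begin{enumerate}
\item Split $X$ into left and right parts and use Weyl's inequality for $X_L^*X_L+X_R^*X_R$.
\item Treat a neighbourhood of size $\frac1c$ of each endpoint separately. There the piece is a time-frequency localization with parameter $1$, whose singular values decay superexponentially.
\item Bound the singular values of the Cauchy operators between the remaining separated sets by Zolotarev-number (rational approximation) estimates.
\end{enumerate}
The cross-ratio of those sets is of order $c$, and that is exactly where the factor $\frac{2}{\pi^2}\log c$ comes from.
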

Note that the dependence on $c$ and on $\eps$ here is logarithmic, as expected, and that the constant $\frac{2}{\pi^2}$ is best possible, as can be seen from Theorem \ref{Slepian} (it is $\frac{2}{\pi^2}$ as opposed to $\frac{1}{\pi^2}$ because we have to cover large positive and large negative $b$ at the same time).

In this work we will study the behaviour of the eigenvalues $\lambda_{n}(c)$ before the plunge region, that is for $n < c$. For fixed $n$ Fuchs \cite{Fuc} showed that the eigenvalues satisfy 
$$1-\lambda_n(c) \sim 4\sqrt{2}\frac{(4\pi)^n}{n!} c^{n-\frac{1}{2}}e^{-\pi c},$$
in particular that they are exponentially close to $1$. This exponential closeness was extended to $n< 0.58 c$ by Bonami, Jaming and Karoui \cite{Jam} and then to $n < (1-\delta)c$ for any $\delta > 0$ by the author \cite{Kulikov} at the expense of the exponent becoming worse when $\delta\to 0$.
\begin{theorem}\label{Kulikov}
For any $\delta > 0$ there exists $\gamma = \gamma(\delta) < 1$ such that for all big enough $c$ and all $n < (1-\delta)c$ we have
$$\lambda_n(c) \ge 1 - \gamma^c.$$
\end{theorem}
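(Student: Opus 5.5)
We have to prove Theorem \ref{Kulikov}: for fixed $\delta>0$ and $n<(1-\delta)c$, $1-\lambda_n(c)\le\gamma^c$. By the min--max principle it is enough to exhibit an $n$-dimensional subspace $V\subset L^2(I)$ such that every $f\in V$ satisfies
$$\|\hat f\|_{L^2(\R\setminus J)}^2\le \gamma^c\|f\|_{L^2(I)}^2.$$
Indeed, $\langle S_{I,J}f,f\rangle=\|\hat f\|_{L^2(J)}^2=\|f\|^2-\|\hat f\|^2_{L^2(\R\setminus J)}$ for $f$ supported on $I$.

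We normalize $I=[-1,1]$ and $J=[-c/4,c/4]$, so that $|I||J|=c$. The natural way to build $V$ is to move to the Paley--Wiener side via the complex-analytic interpretation. The space $\{\hat f: \supp f\subset I\}$ is the space $PW$ of entire functions of exponential type $2\pi$ that are square integrable on $\R$. We then want an $n$-dimensional subspace of $PW$ whose elements carry almost all of their $L^2(\R)$ mass on $J$.

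\emph{Construction.} Take $F_k(z)=\left(\frac{\sin(\pi\eta z/m)}{\pi\eta z/m}\right)^{m}\,\frac{\sin \pi(1-\eta)(z-x_k)}{\pi(1-\eta)(z-x_k)}$ with $m\asymp \delta c$ and $\eta\asymp\delta$ small. The nodes $x_k$, $k=1,\dots,n$, are points of the lattice $(1-\eta)^{-1}\Z$ lying inside $[-(1-\delta/2)c/4,(1-\delta/2)c/4]$; there are enough of them because $n<(1-\delta)c$ once $\eta$ is small relative to $\delta$. The total type is $\pi\eta+\pi(1-\eta)\le 2\pi$ after the appropriate rescaling. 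The sinc factors in $\sin\pi(1-\eta)(z-x_k)/(\ldots)$ are orthogonal in $L^2(\R)$, and the multiplier is close to $1$ on the relevant region. Because of this, the Gram matrix of $\{F_k\}$ restricted to $J$ is a small perturbation of the identity, so the $F_k$ span an $n$-dimensional space.

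\emph{Tail estimate.} For $|x|>c/4$ and any node $x_k$ we have $|x-x_k|\gtrsim\delta c$. The multiplier $(\sin t/t)^m$ with $t=\pi\eta x/m$ is at most $\min(1,(m/(\pi\eta|x|))^m)$, and choosing $m/(\pi\eta)\approx \delta c/100$ makes it at most $e^{-\kappa(\delta)c}$ on $|x|>c/4$. The delicate point is doing this for an arbitrary combination $g=\sum a_kF_k$ rather than for a single $F_k$: we need $\int_{|x|>c/4}|g|^2\le\gamma^c\int_J|g|^2$.

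\emph{Main obstacle.} The hard step is the lower bound on $\int_J|g|^2$, i.e.\ ruling out cancellations. To handle it, write $g=M\cdot h$, where $M$ is the common multiplier and $h=\sum a_k\,\mathrm{sinc}_{1-\eta}(\cdot-x_k)$. Then:
\begin{itemize}
\item $\int_\R|h|^2=(1-\eta)^{-1}\sum|a_k|^2$ by orthogonality.
\item $h$ is concentrated where it should be: its mass outside $[-c/4+\delta c/8,\,c/4-\delta c/8]$ is controlled only polynomially, but $|M|\ge 1/2$ on the central region.
\item On the tail, $|M|\le e^{-\kappa c}$ while $|h|^2$ integrates to at most $\sum|a_k|^2$.
\end{itemize}
The remaining issue is that $h$ might put most of its mass where $M$ is neither $\approx1$ nor tiny. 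This is avoided by making $M\ge1/2$ on all of $[-(1-\delta/4)c/4,(1-\delta/4)c/4]$ through the choice $m/(\eta)\gg c$ in the other direction. These two requirements on $m/\eta$ are compatible, since we only need $(\sin t/t)^m\ge 1/2$ for $t\le t_0$ with $m t_0^2\lesssim1$, together with decay $e^{-\kappa c}$ beyond $t_1=t_0(1+\delta/8)$.

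If the balance of parameters fails, the fallback is a Gaussian-type multiplier $\prod_j \mathrm{sinc}$ with varying frequencies, i.e.\ a compactly supported bump whose Fourier transform is $e^{-\kappa|x|}$-decaying with an adjustable plateau. This produces $\gamma(\delta)<1$, which degenerates as $\delta\to0$, as in the statement.
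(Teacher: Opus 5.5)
There is a genuine gap, and it sits exactly at the step you call the main obstacle. Your multiplier $M(x)=\bigl(\frac{\sin(\pi\eta x/m)}{\pi\eta x/m}\bigr)^m$ is a single Gaussian-type bump centred at the origin. On its main lobe $\log M\approx -mt^2/6$ with $t=\pi\eta x/m$, so $M(x_0)\ge\frac12$ forces $M\bigl((1+\frac\delta8)x_0\bigr)$ to stay above an absolute constant, nowhere near $e^{-\kappa c}$.

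Hence your claim that ``$M\ge\frac12$ on $[-(1-\frac\delta4)\frac c4,(1-\frac\delta4)\frac c4]$'' is compatible with ``$M\le e^{-\kappa c}$ for $|x|>\frac c4$'' is false. With your tail choice $m/(\pi\eta)\approx\delta c/100$, $M$ is already exponentially small at $|x|\approx\frac c8$, i.e.\ on most of $J$. Nothing in your argument then bounds $\int_J|Mh|^2$ from below.

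The fallback does not repair this. A product of sinc factors centred at $0$ that is $\ge\frac12$ at $x_0$ has every factor in its main lobe there, so it is still bounded below by an absolute constant at $(1+\frac\delta8)x_0$.

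Even granting a plateau, the bullet ``$h$ is concentrated where it should be'' is wrong. For $a_k=(-1)^k$ one has $|h(x)|\approx\frac1\pi|\sin\pi(1-\eta)x|\,\log\bigl|\frac{x-a}{x-b}\bigr|$ outside the node interval $[a,b]$. About a third of $\|h\|_2^2$ then lies at distance $\asymp c$ from $[a,b]$, hence outside $J$. For the same reason the Gram matrix on $J$ is not a small perturbation of the identity.

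What you would actually need is two things, neither of which is in the proposal:
\begin{itemize}
\item a local Plancherel--P\'olya sampling bound $\int_{a-1}^{b+1}|h|^2\ge c_0\sum_k|a_k|^2$;
\item a true plateau multiplier, such as $\chi_{[-L',L']}*\psi$ with $\psi\ge0$ narrow-band.
\end{itemize}
There is also a normalization slip. With $I=[-1,1]$ the Fourier side has type $2\pi$, so type-$\pi$ sincs on $(1-\eta)^{-1}\Z$ give only about $\frac c2$ nodes in $J$. You must double the frequencies, or normalize $I=[-\frac12,\frac12]$ and $J=[-\frac c2,\frac c2]$.

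The missing idea is to centre the multiplier at each node instead of at the origin, which is what the paper does. Take $J=[-\frac c2,\frac c2]$, $\eta=\frac\delta4$, and nodes $x_k\in(1-\eta)^{-1}\Z\cap[-(1-\frac\delta2)\frac c2,(1-\frac\delta2)\frac c2]$; there are at least $n$ of them for large $c$. Set $F_k(x)=M(x-x_k)\,\frac{\sin\pi(1-\eta)(x-x_k)}{\pi(1-\eta)(x-x_k)}\in PW_\pi$. Then:
\begin{itemize}
\item $F_k(x_j)=\delta_{kj}$, so the pointwise bound $|F(x)|\le\|F\|_{L^2(\R)}$ gives $\|\sum_k a_kF_k\|_{L^2(\R)}\ge\max_k|a_k|$, with no Gram-matrix analysis at all.
\item Your sinc-power tail bound now applies to each $F_k$ separately, because $|x-x_k|\ge\frac{\delta c}{4}$ for $|x|>\frac c2$. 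This gives $\int_{|x|>c/2}|\sum_k a_kF_k|^2\le n\sum_k|a_k|^2\int_{|x|>c/2}|F_k|^2\lesssim n^2e^{-2\kappa c}\max_k|a_k|^2$ with $\kappa\asymp\delta^2$.
\item The polynomial factor $n^2$ is absorbed into $\gamma^c$.
\end{itemize}
This is the single-progression construction described in the paper's proof strategy. The proof of Theorem \ref{lower time} refines it with dyadic progressions and node-centred factors $g_k(x-p)h_k(x-p)$, and that theorem contains the present statement. The original proof in \cite{Kulikov} instead used almost orthogonal time--frequency shifts of Hermite functions placed by a disk packing.
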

The primary goal of the present work is to find out how the pure exponential closeness in Theorem \ref{Kulikov} transforms into the exponential closeness with an extra $\log (c)$ factor as in Theorem \ref{Slepian} and Theorem \ref{Karnik}. We show that the factor interpolating between them is $\log(\frac{2c}{c-n})$ which is constant if $n < (1-\delta)c$ and which behaves like $\log(c)$ when $n-c$ is small. Specifically, we prove the following two theorems which give respectively lower and upper bounds on the eigenvalues for $n < c$.
\iffalse
\begin{theorem}\label{lower time}
For all $c > 1000$ and $n < c$ we have
$$\lambda_n(c) > 1 - \exp\left(-0.01\frac{c-n}{\log(\frac{2c}{c-n})}\right).$$
\end{theorem}
\begin{theorem}\label{upper time}
For all $c > 1000$ and $n < c-\log^2(c)$ we have
$$\lambda_n(c) < 1 - \exp\left(-100\frac{c-n}{\log(\frac{2c}{c-n})}\right).$$
\end{theorem}
\fi
\begin{theorem}\label{lower time}
There exist numbers $c_0, \eta > 0$ such that for $c > c_0$ and $n < c$ we have
$$\lambda_n(c) > 1 - \exp\left(-\eta\frac{c-n}{\log(\frac{2c}{c-n})}\right).$$
\end{theorem}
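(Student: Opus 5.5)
By the min-max principle it suffices to exhibit, for each $n<c$, an $n$-dimensional subspace $V\subset L^2(I)$ on which $\|P_{J^c}\F f\|^2\le \exp\bigl(-\eta\frac{c-n}{\log(2c/(c-n))}\bigr)\|f\|^2$. Indeed, for $f$ supported on $I$ we have $\langle S_{I,J}f,f\rangle=\|f\|^2-\|P_{J^c}\F f\|^2$, so such a $V$ gives $\lambda_n(c)\ge 1-\exp\bigl(-\eta\frac{c-n}{\log(2c/(c-n))}\bigr)$. Equivalently, we need an $n$-dimensional subspace of functions that are simultaneously concentrated on $I$ and on $J$ with defect $\exp(-\eta\,(c-n)/\log(2c/(c-n)))$. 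Writing $\delta=(c-n)/c$, Theorem~\ref{Kulikov} handles $\delta$ bounded below (there $\log(2/\delta)\asymp 1$ and $\gamma^c\le e^{-\eta(c-n)}$ once $\eta$ is small). For $c-n\le C$ the claim is trivial, since the right-hand side is a constant less than $1$ and $\lambda_n(c)$ is bounded below for $n<c$ by Theorem~\ref{Slepian}-type plunge bounds, or by Theorem~\ref{Karnik} together with the trace identity. So the real range is $C\le c-n\le \delta_0 c$.

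For this range the plan is to build $V$ from a Gabor-type/Paley--Wiener construction with a built-in buffer. Normalize $I=[-\tfrac12,\tfrac12]$ and $J=[-\tfrac c2,\tfrac c2]$. Take $m=c-n$ and choose a smooth window. The key design is as follows: use functions $f=\sum_{k} a_k\,\phi(x-x_k)$, or rather the complex-analytic reformulation, namely entire functions of exponential type $\pi c$ (bandlimited to $J$) that are small outside $I$. A natural choice is $f(x)=p(x)\,g(x)$, where $p$ ranges over trigonometric polynomials with frequencies in an interval of length $n$ (spacing $1$), and $g$ is a fixed window of bandwidth $\sim m$ that is concentrated on $I$. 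The product then has frequency support in an interval of length $n+m=c$, so $f$ is \emph{exactly} bandlimited to $J$. What must be controlled is the leakage outside $I$, and the task becomes finding $g$ with $\widehat g$ supported in $[-m/2,m/2]$ and $|g(x)|$ tiny for $|x|>\tfrac12$ relative to its mass inside.

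This does not quite work directly, because $p$ is $1$-periodic and $g$ cannot be small on $[\tfrac12,1]$. I would therefore modify the construction so that the frequency spacing of $p$ is $1/(1-\tau)$, making $p$ periodic with period $1-\tau$, while $g$ is concentrated on $[-\tfrac{1-\tau}{2},\tfrac{1-\tau}{2}]$ and decays outside it. The dimension is then about $n/(1-\tau)\cdot(1-\tau)$... more concretely, I would choose $p$ with $n$ frequencies at spacing $1$ but restrict attention to a subinterval, using Riesz-basis/norm-equivalence estimates for exponentials on intervals slightly shorter than the period. The decay needed from $g$ comes from the optimal tradeoff for bandlimited functions: a function with spectrum in $[-m/2,m/2]$ can decay like $\exp(-m\,|x|/\log(\cdot))$. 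Using Beurling--Malliavin / Ingham-type windows $\widehat g=\chi*\ldots$, one gets $|g(x)|\le \exp(-c_1 m\tau/\log(1/\tau))$ for $|x|\ge\tau$. Balancing $\tau\asymp \delta$ against the loss of dimension then yields the exponent $(c-n)/\log(2c/(c-n))$. The logarithm arises exactly from the Ingham-type tradeoff between the bandwidth $m$ and the decay rate over a buffer of relative size $\delta$.

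The main obstacle is the uniform lower bound on $\|f\|_{L^2(I)}$ relative to the coefficients of $p$: we need $p g$ not to be degenerate, with the condition number of the map $p\mapsto pg|_I$ growing at most like $e^{o(m/\log)}$. I would handle this by choosing $g\ge$ a positive constant on a slightly shrunk interval $I'$ and applying Ingham's inequality for exponentials with spacing $1$ on $I'$ of length $1-\tau$. This gives a constant lower bound of order $\tau$, which is only polynomial and hence negligible against the exponential gain. The remaining parameter matching, choosing $\tau$ and the frequency count so that the dimension is exactly $n$, is routine.
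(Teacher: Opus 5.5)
You handle the edge ranges correctly: min--max, the dual picture of functions bandlimited to $J$ and concentrated on $I$, Theorem~\ref{Kulikov} for $\delta$ bounded below, and Theorem~\ref{Slepian} for bounded $c-n$. The gap is the main construction, which cannot reach the claimed exponent. A single window times a trigonometric polynomial with uniformly spaced frequencies is exactly the uniform construction that, as the introduction's proof-strategy discussion notes, stalls at the coherent-state rate of Theorem~\ref{lower coherent}.

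With integer frequencies, Ingham's inequality needs an interval longer than $1$ (the reciprocal of the gap), so it gives nothing on $I'$ of length $1-\tau$. Such polynomials can in fact concentrate on the complementary arc of length $\tau$ up to an error exponentially small in $n\tau$. This is the same periodicity obstruction you noticed one paragraph earlier.

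The consistent version uses spacing $1/(1-\tau)$, which gives exact orthogonality on $I'$. But then $n$ frequencies occupy length $(n-1)/(1-\tau)$, so the window bandwidth $m'$ must satisfy $m'+(n-1)\tau/(1-\tau)\le c-n+1$, and hence $m'\tau\lesssim (c-n)^2/c$. Now suppose every $pg$ leaks at most a fraction $e^{-E}$ of its mass outside $I$. Taking $p$ to be a single exponential shows that $g$ itself has leakage at most $e^{-E}$, while $|g|\ge c_0$ on $I'$. A function of bandwidth $m'$ cannot fall from $c_0$ to $e^{-E}$ across a buffer of length $\tau$ unless $E\lesssim m'\tau$, up to logarithmic terms. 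This follows from the Poisson integral in the half-plane plus subharmonicity on a circle of radius $\asymp\tau$, as in Section~\ref{UT}; compare the remark at the end of Section~\ref{UPC}.

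So your scheme gives at best $1-\lambda_n(c)\le\exp(-C(c-n)^2/c)$, far weaker than the claim when $c-n=o(c)$. Your own numbers show this: $m\tau/\log(1/\tau)$ with $m\asymp c-n$ and $\tau\asymp\delta$ equals $(c-n)\delta/\log(1/\delta)$, so the final ``balancing'' silently drops a factor $\delta=(c-n)/c$. The Ingham/Beurling--Malliavin tradeoff governs decay at infinity, not the drop across a short buffer, so it is not where the logarithm comes from. Theorem~\ref{Karnik} cannot patch the middle range either. For $c-n=c/\log c$ the statement asks for an exponent $\asymp c/(\log c\,\log\log c)$, while both your construction and Theorem~\ref{Karnik} give only $\asymp c/\log^2 c$.

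The missing idea is a multiscale construction, and this is also where the logarithm really comes from. The paper decomposes $J$ into Whitney layers $J_k$ at distance $\asymp c2^{-k}$ from $\partial J$. On $J_k$ it uses an arithmetic progression with stretch $\gamma_k=(1-r2^{k-10}/c)^{-1}$ and a window localized at scale $c2^{-k}$, which is admissible because the layer is that far from the boundary. Since $\delta_k\cdot c2^{-k}\asymp r$ for every $k$, each function leaks only $e^{-\Omega(r)}$, while each layer loses only $\asymp r$ degrees of freedom. With $\asymp\log(c/r)$ layers the total loss $r\log(c/r)$ must be $\lesssim c-n$, which forces $r\asymp(c-n)/\log(2c/(c-n))$. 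In your dual language, this means frequencies in $J$ with spacing nearly $1$ in the middle and coarser near $\pm c/2$, each group carrying its own window of bandwidth $\asymp c2^{-k}$ and time-buffer $\asymp r2^{k}/c$.

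Functions from different layers are no longer orthogonal, so you also need a replacement for your Riesz-basis step. The paper uses near-biorthogonality with point evaluations. The sinc factor gives exact interpolation within a layer, and an extra window localized at scale $c^{3/4}$ makes all cross-layer values at most $c^{-10}$; together these give $\|f\|\ge\max_k|c_k|/2$. This requires $c-n\ge c^{7/8}$, and the remaining range $c-n\le c^{7/8}$ is covered by Theorems~\ref{Slepian} and~\ref{Karnik}.
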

\begin{theorem}\label{upper time}
There exist numbers $c_0, \kappa > 0$ such that for $c > c_0$ and $n < c-\log^2(c)$ we have
$$\lambda_n(c) < 1 - \exp\left(-\kappa\frac{c-n}{\log(\frac{2c}{c-n})}\right).$$
\end{theorem}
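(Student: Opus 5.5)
Upper bound on $\lambda_n(c)$ means: for each $n$-dimensional subspace the min of Rayleigh quotient is small, i.e. by min-max $\lambda_n(c)=\max_{\dim V=n}\min_{f\in V}\langle S f,f\rangle/\|f\|^2$. So it suffices to show that every $n$-dimensional subspace $V\subset L^2(I)$ contains a nonzero $f$ with
$$\frac{\int_{\R\setminus J}|\hat f|^2}{\|f\|^2}\ge \exp\Bigl(-\kappa\frac{c-n}{\log(\frac{2c}{c-n})}\Bigr).$$
Normalize $I=[-1,1]$, $J=[-c/4,c/4]$ (any normalization with $|I||J|=c$). The proof is a Paley--Wiener interpolation argument. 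Functions $f\in L^2(I)$ correspond to entire functions $F=\hat f$ of exponential type $2\pi$, bounded on $\R$ in $L^2$. We look for $f\in V$ whose transform vanishes at $n-1$ prescribed points chosen inside $J$. Then we show that such an $F$ cannot have almost all its $L^2(\R)$ mass on $J$: outside $J$ it must retain a proportion at least $\exp(-\kappa\frac{c-n}{\log(2c/(c-n))})$.

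\textbf{Choice of nodes and division.} Let $m=c-n\ge\log^2 c$. Since $F$ has type $2\pi$, its natural zero density on $\R$ is $2$ per unit length, giving $\approx c$ ``degrees of freedom'' on $J$. Prescribing $n-1\approx c-m$ zeros leaves a deficit of about $m$. Choose the nodes $t_1,\dots,t_{n-1}$ in $J$ following the lattice $\frac12\Z$, but thinned near the endpoints of $J$. The missing $\approx m$ points are removed in blocks near $\pm c/4$, at distances ranging from $1$ to about $c$, with a logarithmic spacing profile.

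Let $G(z)$ be the canonical product with these zeros, suitably completed outside $J$ by the full lattice so that $G$ is comparable to $\sin(2\pi z)$ far away. Write $F=G\cdot H$. Comparing $\log|G|$ with $\log|\sin 2\pi z|$ via the counting function of the deficit shows the following. On $J$ away from the ends, $|G(x)|$ is exponentially small relative to its size just outside $J$, by a factor $\exp(-\Theta(m/\log(2c/m)))$. The $\log$ appears because removing $m$ points spread over a scale-$c$ region costs potential of order $\sum_k \log(\cdot)$, optimized by the logarithmic profile.

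\textbf{Extracting mass outside $J$.} Since $F/G$ is essentially of bounded type and $|G|$ on $J$ is much smaller than $|G|$ on $\R\setminus J$ (relative to the balanced lattice weight), a Phragmén--Lindelöf / harmonic-measure argument gives
$$\int_J|F|^2\le e^{C m/\log(2c/m)}\int_{\R\setminus J}|F|^2\cdot(\text{poly}(c)),$$
and hence the desired lower bound on $\int_{\R\setminus J}|F|^2/\|F\|^2$. The hypothesis $m\ge\log^2c$ is used here: the polynomial losses $c^{O(1)}$ coming from the lattice comparison are absorbed into $e^{\kappa m/\log(2c/m)}$, since $m/\log(2c/m)\gtrsim\log c$. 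Linear algebra supplies a nonzero $f\in V$ with $\hat f(t_j)=0$, because there are $n-1<n$ conditions.

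\textbf{Main obstacle.} The hardest step is the sharp potential-theoretic estimate for $\log|G|$: it must show that the optimal placement of the $m$ missing zeros yields exactly the $m/\log(2c/m)$ rate. I would first try a continuous model, comparing the zero counting measure with the equilibrium-type density $\frac{2}{\pi}\cdot\frac{c/4}{\sqrt{(c/4)^2-x^2}}$-like profile cut off at scale $m$. I would then bound the discretization errors by standard Lagrange-type estimates for perturbed lattices (Kadets/Levinson).
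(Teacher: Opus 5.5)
Your framework matches the paper's. You use min-max, then linear algebra to find $f$ in the given $n$-dimensional space whose transform vanishes on fewer than $n$ nodes. The nodes form a lattice thinned near $\partial J$ with a log-uniform deficit; the paper uses spacing $\gamma_k=(1-r2^{k-10}/c)^{-1}$ on the dyadic Whitney intervals $J_k$, i.e.\ a deficit $\asymp r$ per dyadic scale, with $r\asymp\frac{c-n}{\log(2c/(c-n))}$.

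The gap is the step that carries the whole theorem, the inequality $\int_J|F|^2\le e^{Cm/\log(2c/m)}\,\mathrm{poly}(c)\int_{\R\setminus J}|F|^2$. You only assert it, and the mechanism you sketch does not work as described.

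\begin{itemize}
\item $F$ vanishes only at the $n-1$ nodes. If $G$ also carries the lattice zeros outside $J$, then $H=F/G$ has poles on $\R\setminus J$, exactly where your only information about $F$ lives. If $G$ carries only the nodes, it is a polynomial of degree about $c$, and the comparison with $\sin 2\pi z$ that was meant to produce the exponent is lost.
\item A global comparison of $|G|$ is not the operative quantity. With a log-uniform deficit of size $m$, $|G(x)|/|\sin 2\pi x|$ at the centre of $J$ is smaller than just outside $J$ by roughly $\exp(-\Theta(m\log(2c/m)))$. The rate $m/\log(2c/m)$ only becomes visible scale by scale near $\partial J$.
\item A Phragm\'en--Lindel\"of or harmonic-measure bound in the half-plane only controls interior points, weighted by the harmonic measure of $\R\setminus J$, which degenerates as you approach $J$. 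You need a device that brings the zeros in and returns to the real line, and the outline has none.
\item The tools you list for the ``main obstacle'' do not fit. The arcsine density increases towards $\partial J$, the opposite of the thinning you want. Kadets-type theorems need bounded displacements from a lattice, whereas here $m\to\infty$ nodes are missing.
\end{itemize}

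The paper closes the gap with a local argument. Normalize $I=[-\frac12,\frac12]$, $J=[-\frac c2,\frac c2]$, and let $g=\hat f$ with $\|g\|_{L^2(\R)}=1$ and $\int_{\R\setminus J}|g|^2\le 1-\lambda_n(c)$.

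\begin{itemize}
\item Since $|g'|\le 2$ on $\R$, a Landau--Kolmogorov inequality gives $|g|\le 10(1-\lambda_n(c))^{1/3}$ on $\R\setminus J$.
\item The Poisson integral of $\log|g(z)e^{i\pi z}|$ then gives $\log|g(x+iy)|\le\pi|y|+\beta\log(1-\lambda_n(c))+O(1)$ for an absolute $\beta>0$, in the cones $|x\mp\frac c2|<10|y|$. There the harmonic measure of $\R\setminus J$ is bounded below.
\item For $x\in J_k$, apply Jensen's formula on the disc of radius $R=c/2^{k+2}$ about $x$. The circle integral is at most $2R+\frac{\beta}{2}\log(1-\lambda_n(c))+O(1)$: the top and bottom quarter arcs lie in the cones, and the rest uses $|g(z)|\le e^{\pi|{\rm Im}\,z|}$. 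The prescribed zeros contribute at most $-2R/\gamma_{k+1}+O(\log c)=-2R+O(r)+O(\log c)$.
\item The terms $\pm 2R$ cancel, so each Whitney disc costs only its local deficit $O(r)$. Once $\kappa$ is large, the fixed fraction of $\log(1-\lambda_n(c))$ beats this. Near $\pm\frac c2$, plain subharmonicity on a circle of radius $r$ suffices.
\item Hence $|g|\le e^{-r}$ on $J$, so $\int_J|g|^2\le ce^{-2r}<\frac12\le\lambda_n(c)$, a contradiction.
\end{itemize}

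This scale-by-scale balance is exactly what your outline leaves unproved. A constant harmonic-measure gain per dyadic scale is set against a deficit $\asymp r$ per scale over $\asymp\log(c/r)$ scales, and this is what forces $c-n\asymp r\log(c/r)$.
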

%{\color{red}We must mention that both of these results would not have been possible without the generous help of Fedor Nazarov. Specifically, he came up with the construction of the thinning out set of points by means of a dyadic decomposition for Theorem \ref{lower time} and the idea to take a linear combination of eigenfunctions vanishing on this set for Theorem \ref{upper time}. Naturally, all mistakes and inaccuracies in the presentation are solely on the author of the present text.}

\begin{remark}The condition $n < c - \log^2(c)$ corresponds to the fact that in many places in our proofs we have polynomial in $c$ losses, and so we require the exponential factor in the theorems to be at most some negative power of $c$, which means that we need $c - n$ to be at least some multiple of $\log^2(c)$. In fact, we prove Theorem \ref{lower time} only for $n < c - c^{7/8}$ and cover the remaining cases by combining Theorem \ref{Slepian} and Theorem \ref{Karnik}. We suspect, although can not prove, that $\log^2(c)$ term in Theorem \ref{upper time} can be removed as well.
\end{remark}
\begin{remark}
Recently, Martin Dam Larsen and the author \cite{Martin} proved Theorem \ref{lower time} by completely different, purely Fourier analytic methods in the course of their study of the distribution of eigenvalues $\lambda_n(A, B)$ for sets in arbitrary dimensions.
\end{remark}
\subsection{Coherent state transform} Another way to localize a function in time and frequency is to consider its coherent state transform, also known as a short-time Fourier transform in the signal processing literature. For this we fix an $L^2$-normalized window function $\phi:\R^n\to \Cm$ and consider the inner product of the function $f\in L^2(\R^n)$ with the time-frequency shifts of $\phi$:
$$V_\phi f(x, \omega) = \int_{\R^n} f(t)\overline{\phi_{x, \omega}(t)}dt,$$
where $\phi_{x, \omega}(t) = \phi(t-x)e^{2\pi i t \cdot \omega}$. For the applications of this object in physics see \cite{Kla}. In signal processing short-time Fourier transform was introduced by Daubechies \cite{Dau} as a way to capture the phase space picture of the function $f$. It is immediate to see via the Cauchy--Schwarz inequality that $|V_\phi f(x, \omega)|\le \|f\|_{L^2(\R^n)}$ and it is also known that $V_\phi f$ is a continuous function, that $\|V_\phi f\|_{L^2(\R^{2n})} = \|f\|_{L^2(\R^n)}$ and moreover that we have the following resolution of the identity
$$f = \int_{\R^{2n}} V_\phi f(x, \omega) \phi_{x, \omega}dxd\omega.$$
For the proofs of these facts and more thorough exposition of the accompanying theory see e.g. \cite[Chapter 3]{Gro}. Having this representation of the identity operator, to get a localization operator it is natural to restrict the domain over which we are integrating. Specifically, for a measurable set $Q\subset \R^{2n}$ we consider
$$L_{Q}f = \int_{Q} V_\phi f(x, \omega) \phi_{x, \omega}dxd\omega.$$
This is always a self-adjoint non-negative definite operator with $L_Q \le \rm Id$. However, unlike in the time-frequency localization operator case, $V_\phi L_{Q} f$ will in general be supported on the whole $\R^{2n}$.   

Note that $L_Q$ also depends on the window $\phi$. Throughout this paper, we will only consider the case of the one-dimensional Gaussian function $\phi(t) = 2^{1/4}e^{-\pi x^2}$ which was the case considered in \cite{Dau} and for which the most results are available, so we suppress the dependence on $\phi$. If $\Omega$ has finite measure then $L_Q$ is a trace class operator with ${\rm tr}(L_Q) = |Q|$ (see e.g. \cite[Proposition 3.1]{Ric}. In this paper it was also shown that among the sets of a given measure disks maximize the Hilbert--Schmidt norm of $L_Q$). Therefore, it is a compact operator and as such it has a sequence of eigenvalues
$$1 > \mu_1(Q) \ge \mu_2(Q) \ge \ldots \ge 0.$$
Here, just like in the time-frequency localization case, it can be shown that the first eigenvalue is strictly less than $1$ (for otherwise $V_\phi f$ would have to be $0$ outside of $Q$, which would contradict, for example, the uniqueness theorem for the complex-analytic interpretation of $V_\phi f$ which we will use later).

If the set $Q$ is a disk centred at the origin then the Hermite functions are the eigenfunctions of $L_Q$ (if $Q$ is radially symmetric then it is still true, but the order of eigenvalues is not as simple to determine, see \cite{Simon} for the study of this issue). In particular, very precise estimates for the eigenvalues are available in this case. But from the point of view of comparison with the time-frequency localization operator, the case of interest is when $Q$ is a square of area $c$. In this case the eigenvalues $\mu_n(c)$ exhibit a similar phase transition to the time-frequency localization operator case, with $\approx c$ eigenvalues being close to $1$, then $o(c)$ intermediate eigenvalues and then eigenvalues going to $0$ very fast. However, the width of the plunge region here is considerably larger, it is of order $\sqrt{c}$ as opposed to $\log(c)$, see \cite{Old} (there it is assumed that the domain is $C^2$, but the proof works for the square as well. Also, the assumption that $\phi$ is Gaussian is not used there, the argument works for many different domains and different windows).

All previous lower bounds on the eigenvalues $\lambda_n(c)$ apart from the general estimates for the width of the plunge region like Theorem \ref{Slepian} and Theorem \ref{Karnik} either implicitly or explicitly went through the operator $L_Q$ for some domain $Q$ (either a disk or a square), informally saying that $\mu_n(c)$ give a lower bound for $\lambda_n(c)$ when $n < c$. In particular, $\mu_n(c)$ are also exponentially close to $1$ when $n < (1-\eps)c$. So, the natural question arises whether $\mu_n(c)$ and $\lambda_n(c)$ are close when $n$ is close to $c$. We show that this is not the case and that when $c-n$ is small $1-\mu_n(c)$ is significantly larger than $1-\lambda_n(c)$.
\begin{theorem}\label{lower coherent}
There exist numbers $c_0, \eta > 0$ such that for all $c > c_0$ and $n < c-c^{0.99}$ we have
$$\mu_n(c) > 1 - \exp(-\eta (\sqrt{c}-\sqrt{n})^2).$$
\end{theorem}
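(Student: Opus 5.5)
The plan is to use the Bargmann transform. For the Gaussian window, $V_\phi f$ becomes, up to a nonvanishing weight, an entire function $F$ in the Fock space $\mathcal{F}^2$ with norm $\int_{\Cm}|F(z)|^2e^{-\pi|z|^2}dA(z)$. Under this identification $L_Q$ is unitarily equivalent to the Toeplitz operator $T_Q F=P(\chi_Q F)$. By min-max, $\mu_n(c)\ge 1-\epsilon$ will follow once we exhibit an $n$-dimensional subspace $V\subset\mathcal{F}^2$ with
$$\int_{\Cm\setminus Q}|F|^2e^{-\pi|z|^2}\le \epsilon\|F\|^2\quad\text{for all } F\in V.$$
We take $Q$ to be the square $[-\sqrt c/2,\sqrt c/2]^2$ centered at the origin. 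The natural subspace consists of functions concentrated on a smaller concentric region $Q'$ of area about $n$. The gap between the boundaries of $Q'$ and $Q$ is $\frac{\sqrt c-\sqrt n}{2}$, and Gaussian decay at distance $d$ costs $e^{-\pi d^2}$. This is exactly where $(\sqrt c-\sqrt n)^2$ should come from.

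First attempt: the disk case. If we take $V=\mathrm{span}\{z^k\}_{k<n}$, these are the Hermite functions, which are concentrated on the disk of area $n$. The inscribed disk of $Q$ has area $\pi c/4<c$, so this gives the correct rate only for $n<(1-\delta)\pi c/4$, not up to $n$ close to $c$. We therefore need functions adapted to the square. I would construct them via localization and gluing. Let $Q'$ be the square of side $\sqrt c-2d$ with $d=\theta(\sqrt c-\sqrt n)$ for a small $\theta$. Then $|Q'|\ge n+$ an extra margin of size comparable to $\sqrt c(\sqrt c-\sqrt n)$, which exceeds $c^{0.99}$ in our range. Next tile $Q'$ by unit-scale cells, or by a lattice $\Lambda\subset Q'$ of density slightly above $1$. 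For each $\lambda$ take the normalized reproducing kernel $k_\lambda(z)=e^{\pi\bar\lambda z-\pi|\lambda|^2/2}$. Each kernel satisfies
$$\int_{|z-\lambda|>d}|k_\lambda|^2e^{-\pi|z|^2}=e^{-\pi d^2},$$
so each has mass outside $Q$ at most $e^{-\pi d^2}$. Taking $n$ of them spanning $V$, the remaining task is to control the Gram matrix from below. For any $F=\sum a_\lambda k_\lambda$ we then get
$$\int_{Q^c}|F|^2\le\Big(\sum|a_\lambda|\,\|k_\lambda\|_{L^2(Q^c)}\Big)^2\le n\,e^{-\pi d^2}\sum|a_\lambda|^2,$$
while $\|F\|^2\ge\sigma\sum|a_\lambda|^2$. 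The polynomial losses $n$ and $\sigma^{-1}$ are absorbed since $d^2\gtrsim(\sqrt c-\sqrt n)^2\gg c^{0.98}$ dominates $\log c$. (Points near the boundary of $Q'$ should be weighted more carefully, but the crude bound suffices.)

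The main obstacle is the Riesz/Gram lower bound. A lattice of density $>1$ is not a Riesz sequence in Fock space: by Seip's theorem, Riesz sequences need density $<1$. So we cannot take a full lattice of density $1+$ inside $Q'$ and must use density $\rho<1$. With $n$ points this forces area $n/\rho$, and we then need $n/\rho\le|Q'|$. Choosing $\rho=1-c^{-0.01}$ roughly costs area $nc^{-0.01}\approx c^{0.99}$, which is compatible with $n<c-c^{0.99}$. The catch is that the Riesz bound of a lattice with density $1-\epsilon$ degrades, plausibly like $e^{-C/\epsilon}$ or polynomially in $\epsilon$. I would first try to verify that a finite section of the lattice $\sqrt{1/\rho}\,(\Z+i\Z)$ has lower Riesz bound at least $\exp(-C\epsilon^{-1}\log(1/\epsilon))$ or better, using the Weierstrass $\sigma$-function interpolation construction. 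That bound yields $\log\sigma^{-1}\ll c^{0.02}$, negligible against $(\sqrt c-\sqrt n)^2$. Alternatively, instead of choosing $d$ proportional to $\sqrt c-\sqrt n$ with a $\theta$-fraction lost to density, take half of the gap for density slack and half for decay. The final constant $\eta$ comes out as $\pi\theta^2$ minus negligible terms.
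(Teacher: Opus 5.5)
Your route is genuinely different from the paper's and it works. However, the step you postpone, the quantitative lower Riesz bound, is where all the content lies, so as written this is a plan rather than a proof.

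\textbf{The missing step holds, with a much better constant than you feared.} Let $\Lambda=\alpha(\Z+i\Z)$ with $\alpha^{-2}=\rho=1-\epsilon$.
\begin{enumerate}
\item For the square lattice, $|\sigma(z)|e^{-\frac{\pi}{2}|z|^2}$ is periodic (by Legendre's relation). Scaling this gives $|\sigma_\Lambda(z)|e^{-\frac{\pi}{2\alpha^2}|z|^2}\asymp\mathrm{dist}(z,\Lambda)$ with absolute constants.
\item Put $G_\mu(z)=C_\mu\frac{\sigma_\Lambda(z)}{z-\mu}e^{\pi\epsilon\bar\mu z}$, normalized so that $G_\mu(\mu)e^{-\frac{\pi}{2}|\mu|^2}=1$. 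It vanishes on $\Lambda\setminus\{\mu\}$ and satisfies $|G_\mu(z)|e^{-\frac{\pi}{2}|z|^2}\lesssim\min(1,|z-\mu|^{-1})e^{-\frac{\pi}{2}\epsilon|z-\mu|^2}$.
\item For $F=\sum a_\lambda k_\lambda$ you then have $a_\mu=\langle F,G_\mu\rangle$. Schur's test on the Gram matrix of $\{G_\mu\}$ gives $\sum|a_\mu|^2\lesssim\epsilon^{-1}\|F\|^2$, simultaneously for every finite subset of $\Lambda$.
\end{enumerate}
So the lower Riesz bound is linear in $\epsilon$. This is consistent with the Borichev--Gr\"ochenig--Lyubarskii asymptotics for Gaussian Gabor systems near critical density.

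\textbf{One correction to your parameters.} Take $1-\rho\asymp(c-n)/c$ rather than the fixed $c^{-0.01}$. The margin $|Q'|-n$ is only about $(1-2\theta)(c-n)$, with your $\theta$. A fixed $\rho$ therefore breaks down when $c-n$ is close to $c^{0.99}$.

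\textbf{Comparison with the paper.} The paper never needs a Riesz bound. It packs a slightly shrunk copy of $Q$ with the Lieb--Lebowitz disks of Lemma \ref{lem}. On each disk $B_m$ it places the shifted monomials $T_{w_m}\sqrt{\pi^k/k!}\,z^k$, $k\le\pi R_m^2$. These are exactly orthogonal within a disk. They are almost orthogonal across disks, because the disks are $\gtrsim\sqrt c\,\eps^{23}$ apart, where $\eps\asymp(c-n)/c$. The Gram matrix is thus the identity plus entries of size at most $\frac{1}{2c}$.

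That argument is elementary and self-contained. Its cost is that the polynomially small radii in Lemma \ref{lem} force $c\eps^{46}\gg\log c$, which is exactly where $c^{0.99}$ comes from. Your single subcritical lattice has no separation issue; the price is the $\sigma$-function input.

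In exchange, your losses are all polynomial in $c$: the factor $n$ and the Riesz constant $\epsilon^{-1}\lesssim\sqrt c$. The lattice-counting error is only $O(\sqrt c)$. Hence your argument gives the estimate whenever $(\sqrt c-\sqrt n)^2\ge C\log c$, that is, for $c-n\ge C\sqrt{c\log c}$. This matches the range of Theorem \ref{upper coherent} up to the constant. It also goes past the $c^{1/2+o(1)}$ barrier that the paper says its own method cannot cross.
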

\begin{theorem}\label{upper coherent}
There exist numbers $c_0, \kappa > 0$ such that for all $c > c_0$ and $n < c-\sqrt{c\log(c)}$ we have
$$\mu_n(c) < 1 - \exp(-\kappa (\sqrt{c}-\sqrt{n})^2).$$
\end{theorem}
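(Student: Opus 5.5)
We prove the upper bound via the Bargmann transform. Under it, $V_\phi f$ becomes (up to a unimodular factor and the weight $e^{-\pi|z|^2/2}$) an entire function $F$ in the Fock space $\mathcal{F}^2$ with norm $\int_{\Cm}|F(z)|^2e^{-\pi|z|^2}dA(z)$. The operator $L_Q$ is then unitarily equivalent to the Toeplitz operator with symbol $\chi_Q$. So
$$\mu_n(c)=\max_{\dim V=n}\min_{F\in V}\frac{\int_Q |F|^2e^{-\pi|z|^2}}{\|F\|^2},$$
and hence
$$1-\mu_n(c)=\min_{\dim V=n}\max_{F\in V}\frac{\int_{\Cm\setminus Q}|F|^2e^{-\pi|z|^2}}{\|F\|^2}.$$
To show $1-\mu_n(c)>\exp(-\kappa(\sqrt c-\sqrt n)^2)$, it suffices, by the min-max principle, to show that every $n$-dimensional subspace $V$ contains a nonzero $F$ with
$$\int_{\Cm\setminus Q}|F|^2e^{-\pi|z|^2}\ge e^{-\kappa(\sqrt c-\sqrt n)^2}\|F\|^2.$$
Any $n$ linear conditions on $V$ leave a nonzero function. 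We therefore choose $n-1$ zeros, so $F\in V$ vanishes at $n-1$ prescribed points $z_1,\dots,z_{n-1}\in Q$, and we choose these points to force $F$ to carry a lot of mass outside $Q$.

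The points. Let $Q$ be the square of side $\sqrt c$ centred at $0$. Place the $n-1$ points on a roughly uniform lattice filling a concentric square $Q'$ of area about $n$ (side $\sqrt n$) sitting inside $Q$. The gap between $\partial Q'$ and $\partial Q$ is then $d=(\sqrt c-\sqrt n)/2$.

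Subharmonicity argument. Write $F=P\cdot G$ with $P(z)=\prod(z-z_j)$. The function $u=\log|F|-\frac{\pi}{2}|z|^2$ satisfies $\Delta u=2\pi\sum\delta_{z_j}-2\pi$. The zeros have density about $1$ per unit area on $Q'$, matching the background density of the Fock weight. So $\log|P|-\frac\pi2|z|^2$ is roughly constant on $Q'$ and decays outside like the potential of the defect. In particular, the weighted size of $P$ outside $Q$ is comparable to its size on $Q'$, up to a factor $e^{-O(d^2)}$ governed by the distance $d$. The normalized "mass in the annulus $Q\setminus Q'$ versus outside" ratio is then controlled by the Gaussian weight across a strip of width $d$.

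Concretely, I would use a sub-mean-value inequality for $|F|^2e^{-\pi|z|^2}$ adapted to the weight, namely $\Delta\log(|F|^2e^{-\pi|z|^2})\ge-2\pi$ away from zeros. Near the zeros we compensate by dividing out by $|P|^2$, which is harmonic away from the $z_j$ and has controlled growth. This bounds the mass of $F$ inside $Q$ by $e^{O(d^2)}$ times its mass outside $Q$. The key inequality is a Carleman/three-lines type estimate: the function $v=\log\bigl(|G|e^{\Phi}\bigr)$, where $\Phi=\log|P|-\frac\pi2|z|^2+\frac\pi2|z|^2$ corrected, is subharmonic. We compare it on $Q$ against its values on the boundary region, and the quadratic term costs $\exp(O(d\cdot\sqrt c\cdot d/\sqrt c))=\exp(O(d^2))$.

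Main obstacle. The hardest step is rigorously controlling $\log|P(z)|-\frac\pi2|z|^2$ uniformly. We need it to be $\ge -O(d^2)-O(\log c)$ near $\partial Q$ and outside, and $\le O(\log c)$ on $Q'$, which requires discrepancy estimates for the lattice product. Near the zeros we need local averaging at scale $1$, which costs only polynomial factors in $c$. These polynomial losses are exactly why we require $n<c-\sqrt{c\log c}$: that condition gives $d^2\gtrsim\log c$, so the $c^{O(1)}$ errors are absorbed into $e^{-\kappa d^2}$. I would first try the one-dimensional-like estimate along horizontal and vertical lines, using Jensen's formula on disks of radius about $d$ centred on $\partial Q$, to get the lower bound on outside mass.
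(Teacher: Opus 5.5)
\textbf{The step you call the main obstacle is false for a square, so the proposal has a genuine gap.} Away from the lattice points, $\log|P(z)|$ agrees up to $O(1)$ with $\int_{Q'}\log|z-w|\,dA(w)$. The function $U(z)=\int_{Q'}\log|z-w|\,dA(w)-\frac{\pi}{2}|z|^2$ is harmonic in $Q'$, but it is constant there only when $Q'$ is a disk. If $Q'$ has side $\sqrt n$, scaling gives $U(\sqrt n\,\zeta)=nU_1(\zeta)+\frac{n}{2}\log n$ with a fixed nonconstant $U_1$. For instance, the outward normal derivative of $U$ is about $(1.73-\frac{\pi}{2})\sqrt n>0$ at the midpoints of the edges and negative at the corners. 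So $U$ oscillates by $\asymp n$ on $Q'$, and even changes by $\asymp d\sqrt n\asymp c-n$ across the frame at an edge midpoint. Consequently no normalization makes $\log|P|-\frac{\pi}{2}|z|^2$ both $\le O(\log c)$ on $Q'$ and $\ge -O(d^2)-O(\log c)$ near $\partial Q$ once $d^2=o(n)$. Writing $F=PG$ and applying the maximum principle to $\log|G|$ with such sup/inf bounds therefore costs $e^{\Theta(n)}$, not $e^{O(d^2)}$. The function $\Phi$ (``$\frac{\pi}{2}|z|^2$ corrected'') that is supposed to repair this is never specified, so as written there is no valid core estimate.

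\textbf{The missing idea, and how the zero-based route can be repaired.} You must pay only for the failure of $U$ to be harmonic, not for its oscillation. Keep the harmonic part inside the subharmonic function and control only the lattice discrepancy. Set $D=\sqrt c-\sqrt n$ (your $d=D/2$) and take $Q'=[-\frac m2,\frac m2]^2$ with $m=\lfloor\sqrt{n-1}\rfloor$, tiled by unit squares centred at the $z_j$. Put $\phi=\log|P|-\int_{Q'}\log|z-w|\,dA(w)$ and $\eta(z)=\pi(|x|-\frac m2)_+^2+\pi(|y|-\frac m2)_+^2$. Then:
\begin{enumerate}
\item $\log|F|-\frac{\pi}{2}|z|^2-\phi+\eta$ is subharmonic on all of $\Cm$, because $\Delta\eta\ge 2\pi$ off $Q'$.
\item A square cell has $\int(w-z_j)^k\,dA(w)=0$ unless $4\mid k$, so the multipole expansion gives $\phi\le C$ everywhere and $\phi\ge -C$ at distance $\ge\frac12$ from the lattice.
\item $\eta\le\frac{\pi}{2}D^2+O(D)$ on $Q$ enlarged by $2$.
\item On the boundary of that enlarged square, your hypothesis plus the sub-mean-value inequality on unit disks lying outside $Q$ gives $\log|F|-\frac{\pi}{2}|z|^2\le-\frac{\kappa}{2}D^2+O(1)$.
\end{enumerate}
The maximum principle then yields $\log|F|-\frac{\pi}{2}|z|^2\le-\frac{\kappa-\pi}{2}D^2+O(D)$ on $Q$. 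This contradicts $\|F\|_{\Fo}=1$ for large $\kappa$, since $D^2\ge\frac14\log c$.

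\textbf{Comparison with the paper.} The paper avoids zeros and discrepancy estimates entirely. It takes $F$ orthogonal to the first $n-1$ eigenfunctions of $L_{Q'}$ with $|Q'|=n-1$. It uses $\operatorname{tr}L_{Q'}=n-1$ to force mass $>\frac{1}{2c}$ into the frame $Q\setminus Q'$. It then applies the sub-mean-value inequality for $\log|F|$ on a single circle of radius $10D$ around a heavy point of the frame. There the crude bound $|F|\le e^{\frac{\pi}{2}|z|^2}$ costs only $\frac{\pi}{2}(10D)^2$, while the arc beyond $\partial Q$ gains $\frac{\kappa}{200}D^2$.
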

In particular, Theorem \ref{upper coherent} and Theorem \ref{lower time} together show the asymptotic separation between $1-\mu_n(c)$ and $1-\lambda_n(c)$ if $c - n = o(c)$.
\begin{remark}The condition $n < c -\sqrt{c\log(c)}$ appears for the same reason as in Theorem \ref{upper time} -- we have some polynomial in $c$ losses and we have to compensate them. The reason for the $c^{0.99}$ term is our reliance on the disk packing from Lemma \ref{lem}, in particular our argument fundamentally does not allow us to have the power of $c$ to be $\frac{1}{2}$, or even $\frac{1}{2}+o(1)$, and so we did not try to optimize the power $0.99$.
\end{remark}
\subsection{Proof strategy} The proofs of Theorem \ref{lower time} and Theorem \ref{lower coherent} on one side and Theorem \ref{upper time} and Theorem \ref{upper coherent} on the other side, follow the same basic approach: we start with a simple functional-analytic way to bound the eigenvalues from an appropriate side, and then implement this in each respective situation, which takes the bulk of the work. 

For Theorem \ref{lower time} and Theorem \ref{lower coherent} we use the min-max characterization of the eigenvalues
$$\lambda_n(c) = \sup_{\dim V = n} \inf_{v\in V\backslash \{0\}} \frac{\|S_{I, J} v\|}{\|v\|}$$
and a similar formula for $L_Q$. To bound this expression from below, we have to exhibit a single subspace of dimension $n$ which gives the required lower bound. To construct a subspace of dimension $n$, the natural way is to take $n$ linearly independent vectors  $v_1, v_2, \ldots , v_n$ and let $V$ be their linear span. To get from here the lower bound on the $n$'th eigenvalue it is sufficient to have the following two conditions satisfied
\begin{enumerate}
\item For all $k = 1, 2, \ldots , n$ the ratio $\frac{\|S v_k\|}{\|v_k\|}$ is close to $1$,
\item The norm of any linear combination $\sum_{k=1}^n c_k v_k,\,\, c_k\in \Cm$ is not much smaller than the $\ell^1$-norm of the terms $\sum_{k=1}^n |c_k| \|v_k\|$.
\end{enumerate}
If we have these two conditions then we can show that for any $v = \sum_{k=1}^n c_k v_k$ the ratio $\frac{\| Sv\|}{\|v\|}$ is also close to $1$. Of these two conditions the second one is clearly harder to satisfy since it deals with all possible linear combinations and not just $n$ different numbers (although of course the first one is also necessary). So, the approach we would take is to first make sure that the second condition is satisfied and then see what bound we can get in the first condition.

The simplest way to satisfy the second condition is to demand that $v_k$ are pairwise orthogonal, in this case we even know the exact formula for $\|\sum_{k=1}^n  c_k v_k\|$. This was the approach taken in \cite{Jam} where they considered the Hermite functions and this led to the exponential bound for $n < 0.58 c$. However, we do not need precise orthogonality to achieve the second condition, it is enough to have almost orthogonality $\langle v_k, v_l\rangle \approx \delta_{k, l}$. This was done in \cite{Kulikov} where the time-frequency shifts of Hermite functions were considered to push this bound to $n < (1-\delta)c$ for any $\delta > 0$. It turns out that, with a lot of optimizations, the same method works for Theorem \ref{lower coherent}, but we have to more carefully choose the set of time-frequency shifts we consider (with the help of Lemma \ref{lem}), pass to a slightly smaller subsquare to achieve stronger concentration bounds outside the larger square, and exercise more care while establishing upper bounds on the inner products $\langle v_k, v_l\rangle$. Theorem \ref{lower coherent} is the only one of our four results whose proof is somewhat similar to a result previously appearing in the literature.

For Theorem \ref{lower time} the (almost) orthogonal sequence approach does not seem to be sufficient, so we have to come up with a different way to ensure the second condition. We will do this with a trick of biorthogonal sequences: if we have another sequence of vectors $w_k$ such that $\langle v_k, w_l\rangle = \delta_{k, l}$ then by the Cauchy--Schwarz inequality we can see that
$$\|\sum_{k=1}^n c_k v_k\| \ge \max_k \frac{|c_k|}{\|w_k\|}.$$
The natural setting for the time-frequency localization operator is the Paley--Wiener space, and in this space there are very well-known candidates for the biorthogonal vectors $w_k$ --- the reproducing kernels, inner products with which give the values of our function at the points. The simplest way would be to just take $v_k = \frac{\sin(\pi x)}{x-k}$, which is even an orthogonal sequence, but unfortunately it has very poor concentration and we will fail the first condition. We can modify this by considering $v_k = \frac{\sin(a^{-1}\pi x)}{x-ak}h(x-ak)$ for some $a > 1$ and the function $h$ being responsible for the decay. This sequence is biorthogonal to the system of reproducing kernels at the points $ak, k\in \Z$ and we have a parameter $a$ at our disposal (the closer $a$ is to $1$, the less decay can we guarantee from the function $h$). Unfortunately, even optimizing in $a$ the best this method could give is the same bound as in Theorem \ref{lower coherent}, which is nowhere near what we claim in Theorem \ref{lower time}.

The reason for this failure is that while the bound is optimal for points near the endpoints of the interval, we have a lot more concentration than we need near the centre of it. So, instead of the arithmetic progression $ak, k\in \Z$ we want to consider a  set which is more dense near the centre of the interval, and whose density drops when we approach the endpoints. The issue is that we can not easily find a sequence of functions vanishing at all these points, so the biorthogonality condition is hard to satisfy. The key idea which leads to the proof of Theorem \ref{lower time} is that we don't actually have to do it --- just like before, the almost biorthogonality condition $\langle v_k, w_l\rangle \approx \delta_{k, l}$ is sufficient for our purposes. Our set of points for the reproducing kernels will be a union of arithmetic progressions on dyadic subintervals of our interval, which get thinner the closer we are to the endpoints. Inside of each dyadic subinterval the sequences are biorthogonal, and between different subintervals we use the decay of the function $h$ to ensure that the inner products are small enough, even if they are not necessarily $0$. 

For Theorem \ref{upper time} and Theorem \ref{upper coherent} we will use a different fact from linear algebra: if we have a vector space $V$ of dimension $n$ and $n-1$ linear functionals on it, then we can find a non-zero vector $v\in V$ for which all of these functionals are $0$. So, we will take the linear span of the first $n$ eigenfunctions and take their linear combination which vanishes at these functionals (for which we still have the concentration by the min-max principle). For Theorem \ref{upper coherent} we will take as the linear functionals the inner products with the first $n-1$ eigenfunctions of the localization operator $L_{Q'}$ for a smaller square $Q'$. Then, from the simple trace bound the resulting vector $v$ will have a big portion of its $L^2$-mass concentrated in $Q\backslash Q'$ and will also have almost no $L^2$-mass in $\R^2\backslash Q$. We will derive a contradiction from this by using the subharmonicity of the logarithm of the associated analytic function (see Section $2$ for the details) if the $L^2$-mass in $\R^2\backslash Q$ is small enough.

For the situation in Theorem \ref{upper time} the best we were able to prove with this strategy (by taking a subinterval instead of a subsquare) is the upper bound of the form
$$\lambda_n(c) \le 1 -\exp(-\kappa (c-n))$$
which is weaker than the bound in Theorem \ref{upper time}. Instead, we will consider a vector $v$ which is orthogonal to the same reproducing kernels that we considered in the proof of Theorem \ref{lower time}. Here, just using the subharmonicity of the logarithm will be insufficient, we will use a more refined bound coming from the Jensen's formula for the value of the logarithm of an analytic function at a point which takes into account zeroes that it has and will give us the bound we claim in Theorem \ref{upper time}.

Lastly, let us mention that we will frequently and tacitly use the following property of the operators $S_{I, J}$ and $L_Q$: they both have the form $X^*X$ for a certain operator $X$, specifically $$S_{I, J} = (P_J\F P_I)^*(P_J\F P_I)$$
and 
$$L_Q = (P_Q V_\phi)^*(P_Q V_\phi).$$

The eigenvalues of the operator $X^* X$ are the same as the squares of the singular values of $X$, which can be computed with the same min-max procedure for the expression $\frac{\| Xv\|^2}{\|v\|^2}$.
\subsection{Structure of the paper} In Section $2$ we state the  facts about the Bargmann--Segal--Fock space and its relation with the coherent state transform. In Section $3$ we prove Theorem \ref{upper coherent}. In Section $4$ we prove Theorem \ref{lower coherent}. In Section $5$ we recall the basic properties of the Paley--Wiener space that we will use. In Section $6$ we prove Theorem \ref{upper time}. In Section $7$ we prove Theorem \ref{lower time}. Finally, in Section $8$ we state some remaining open questions regarding the eigenvalues $\lambda_n(c)$ and $\mu_n(c)$ for $n < c$ and what our methods can give for $n > c$.
\section{Bargmann transform}
For the function $\phi(x) = 2^{1/4}e^{-\pi x^2}$ we have
\begin{align*}
V_\phi f(x, \omega) = 2^{1/4}\int_\R f(t)e^{-\pi (t-x)^2} e^{- 2\pi i t \omega}dt=\\
2^{1/4}e^{-\pi i x\omega} e^{-\frac{\pi}{2}(x^2+\omega^2)}\int_\R f(t)e^{2\pi t(x-i\omega)-\frac{\pi}{2}(x-i\omega)^2-\pi t^2}dt.
\end{align*}
For a function $f$ we define its Bargmann transform at the point $z = x-i\omega\in \Cm$ as 
$$\B f(z) = 2^{1/4}\int_\R f(t) e^{2\pi t z - \frac{\pi}{2}z^2 - \pi t^2}dt.$$
By differentiation under the integral sign it is easy to see that this is a holomorphic function of $z$ and from the above computation we have $|V_\phi f(x, \omega)| = |\B f(x-i\omega)|e^{-\frac{\pi}{2}(x^2+\omega^2)}$. Therefore, Bargmann transform is an isometry from $L^2(\R)$ to the Bargmann--Segal--Fock space
$$\Fo = \{ F\in Hol(\Cm): \|F\|^2_{\Fo} = \int_\Cm |F(z)|^2e^{-\pi |z|^2}dz < \infty\},$$
where $dz$ is the Lebesgue measure on $\Cm$. It is in fact true that $\B$ is a bijection between $L^2(\R)$ and $\Fo$ (one way to see this is to show that Hermite function $H_k$ gets mapped to $\sqrt{\frac{\pi^k}{k!}}z^k$ which form an orthonormal basis for $\Fo$). For this and other facts about the Bargmann transform see e.g. \cite[Section 3.4]{Gro}.

Time-frequency shifts of the function $f$ correspond to shifts on the complex plane. They have an associated family of isometries on the space $\Fo$: for a function $F\in \Fo$ and $w\in \Cm$ the function
$$T_w F(z) = F(z-w)e^{\pi z\bar{w} - \frac{\pi}{2}|w|^2}$$
also belongs to $\Fo$, $\|F\|_\Fo = \|T_w F\|_\Fo$ and moreover this transformation simply shifts the distribution of $|F(z)|e^{-\frac{\pi}{2}|z|^2}$. From this invariance it is in particular easy to see that the eigenvalues $\mu_n(c)$ do not depend on which square of area $c$  we consider (with sides parallel to the real and imaginary axis, although it is also possible to show the invariance under rotations), so we will always consider the square centred at the origin.

Last fact that we need is the pointwise estimate for the values of the functions in $\Fo$, which follows from the pointwise estimates for the coherent transform: for all $F\in \Fo$ and $z\in\Cm$ we have $|F(z)|\le e^{\frac{\pi}{2}|z|^2}\|F\|_{\Fo}$.
\section{Proof of Theorem \ref{upper coherent}}\label{UPC}
Assume that for some big enough $c$ and for some $n < c - \sqrt{c\log(c)}$ we have $1-\mu_n(c)  \le \exp(-\kappa (\sqrt{c}-\sqrt{n})^2)$. Let us consider the square $Q'$ of area $n-1$ and the associated operator $L_{Q'}$ (if $n = 1$ then $Q' = \{0\}$). We will take a normalized linear combination $f$ of the first $n$ eigenfunctions of $L_Q$ which is orthogonal to the first $n-1$ eigenfunctions of $L_{Q'}$. Then by the min-max principle we have 
$$\int_{\R^2} |V_\phi f(x, \omega)|^2dxd\omega = 1,$$
$$\int_{Q'} |V_\phi f(x,\omega)|^2dxd\omega \le \mu_n(n-1),$$
$$\int_{Q} |V_\phi f(x,\omega)|^2dxd\omega \ge \mu_n(c)$$
and we want to arrive at a contradiction from here. We let $F = \B f$ which then satisfies
$$\int_\Cm |F(z)|^2e^{-\pi |z|^2}dz = 1,$$
$$\int_{Q'} |F(z)|^2e^{-\pi |z|^2}dz\le \mu_n(n-1),$$
$$\int_Q |F(z)|^2e^{-\pi |z|^2}dz \ge \mu_n(c).$$

First, we will need a very simple upper bound on $\mu_n(n-1)$: since ${\rm tr} L_{Q'} = n-1$, we have 
$$n-1=\sum_{k=1}^\infty \mu_k(n-1) \ge \sum_{k=1}^n \mu_k(n-1) \ge n\mu_n(n-1),$$
so $\mu_n(n-1) \le \frac{n-1}{n} < 1-\frac{1}{c}$. This means that
$$\int_{\Cm\backslash Q'} |F(z)|^2e^{-\pi |z|^2}dz > \frac{1}{c}.$$
We also have
$$\int_{\Cm \backslash Q} |F(z)|^2e^{-\pi |z|^2}dz \le 1-\mu_n(c).$$
Therefore,
$$\int_{Q\backslash Q'} |F(z)|^2e^{-\pi |z|^2}dz >\frac{1}{c} - \exp(-\kappa (\sqrt{c}-\sqrt{n})^2).$$
If $\kappa \ge 10$ and $c$ is big enough then the second term here is at most $c^{-2}$ (this is one of the places where we need $n <c -\sqrt{c\log(c)}$). So, if $c > 2$ we have
$$\int_{Q\backslash Q'} |F(z)|^2 e^{-\pi |z|^2}dz > \frac{1}{2c}.$$
This means that there exists $z_0\in Q\backslash Q'$ for which $|F(z_0)|^2 e^{-\pi |z_0|^2} > \frac{1}{2c |Q\backslash Q'|}$. We will bound $|Q\backslash Q'|$ from above simply by $|Q| = c$, so that $|F(z_0)|^2 e^{-\pi |z_0|^2} > \frac{1}{2c^2}$. Note that $Q\backslash Q'$ is a (not disjoint) union of four rectangles, one on the right, one on the left, one on the top and one on the bottom. We will assume that $z_0$ belongs to the right rectangle, the other $3$ cases are handled completely analogously. This means that ${\rm Re}\, z_0 \ge \frac{\sqrt{n-1}}{2}$.

Let $r = 10(\sqrt{c}-\sqrt{n})$ and consider the circle centred at $z_0$ of radius $r$. If $c$ is big enough, the points on the right $\frac{1}{100}$'th of this circle satisfy ${\rm Re}\, z \ge \frac{\sqrt{c}}{2}+1$ (in fact, a stronger bound $\frac{\sqrt{c}}{2} + \frac{\sqrt{\log(c)}}{10}$ is possible but not necessary for us). The function $\log |F(z)|$ is subharmonic so we have
\begin{equation}\label{subh1}
\log |F(z_0)| \le \int_0^1 \log |F(z_0 + re^{2\pi i t})|dt.
\end{equation}
For all but that $\frac{1}{100}$'th arc of the circle we will on the right-hand side use a pointwise bound $$|F(z_0 + re^{2\pi i t})| \le e^{\frac{\pi}{2}|z_0+re^{2\pi i t}|^2}.$$

For $w$ in the right $\frac{1}{100}$'th of the circle we are going to obtain a stronger bound on $|F(w)|$. Note that a disk of radius $1$ around $w$ is still outside of $Q$, so the integral over it is at most $1-\mu_n(c)$. Let us consider the function $G=T_w F$, we have $|G(0)| = |F(w)|e^{-\frac{\pi}{2}|w|^2}$ and 
$$\int_{|z|\le 1} |G(z)|^2e^{-\pi |z|^2}dz \le 1-\mu_n(c).$$
We have $e^{-\pi |z|^2} \ge e^{-\pi}$ for $|z| \le 1$ so
$$\int_{|z|\le 1}|G(z)|^2 dz\le \frac{1-\mu_n(c)}{e^{-\pi}}.$$
The function $G(z)$ is analytic, therefore $|G(z)|^2$ is subharmonic. So, 
$$|G(0)|^2 \le \frac{1}{\pi} \int_{|z|\le 1}|G(z)|^2dz \le \frac{1-\mu_n(c)}{\pi e^{\pi}}\le \frac{\exp(-\kappa (\sqrt{c}-\sqrt{n})^2)}{\pi e^{-\pi}}.$$
Taking the logarithm, dividing by $2$ and recalling that $|G(0)| = |F(w)|e^{-\frac{\pi}{2}|w|^2}$ we get
$$\log |F(w)| \le \frac{\pi}{2}|w|^2 - \frac{\kappa}{2}(\sqrt{c}-\sqrt{n})^2 +2.$$
Plugging our pointwise bounds into \eqref{subh1} we get
$$\log |F(z_0)| \le \int_0^1 \frac{\pi}{2} |z_0+re^{2\pi i t}|^2dt - \frac{1}{100} \left(\frac{\kappa}{2}(\sqrt{c}-\sqrt{n})^2-2\right).$$
The first integral is equal to $\frac{\pi}{2}|z_0|^2 + \frac{\pi}{2}r^2$. Combining this with our lower bound on $|F(z_0)|$ we get
$$\frac{\kappa}{200}(\sqrt{c}-\sqrt{n})^2 \le \frac{\pi}{2}r^2 + \log(c) + 10.$$
Recalling that $r = 10(\sqrt{c}-\sqrt{n})$ we get
$$(\sqrt{c}-\sqrt{n})^2\left(\frac{\kappa}{200}-50\pi\right) \le \log(c) +10$$
which is false if $\kappa$ is big enough (once more, here we need $c-n\ge \sqrt{c\log(c)}$ to beat $\log(c)$ term on the right-hand side).
\begin{remark}
We can try doing a similar argument in the time-frequency localization case, by taking a subinterval $J'=[-\frac{n-1}{2},\frac{n-1}{2}]$ of $J=[-\frac{c}{2},\frac{c}{2}]$ and considering a linear combination of the eigenfunctions of $S_{I,J}$ which is orthogonal to the first $n-1$ eigenfunctions of the  $S_{I,J'}$. Unfortunately, this only gives an upper bound $\lambda_n(c)\le 1 - \exp(-\delta(c-n))$ for some $\delta > 0$ as functions in the Paley--Wiener space can drop by only an exponential in $r$ factor over the course of an interval of length $r$.
\end{remark}
\section{Proof of Theorem \ref{lower coherent}}
We begin with the following disk packing lemma of Lieb and Lebowitz \cite{Lieb}.
\begin{lemma}[{\cite[Lemma 4.1]{Lieb}}]\label{lem}
There exists $\eps_0 > 0$ such that for all $0 < \eps < \eps_0$ there exists a finite set of pairwise disjoint disks $D_1, D_2, \ldots , D_N\subset (-\frac{1}{2}, \frac{1}{2})^2$ such that the measure of their union is at least $1-\eps$ and the radius of each of these disks is at least $\eps^{22}$. 
\end{lemma}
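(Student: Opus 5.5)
The plan is to build the packing greedily in generations, working at geometrically decreasing scales. At each generation we cover a fixed proportion of the area that is still uncovered, and the losses along boundaries have to be kept proportional to that uncovered area. Fix a small constant $\theta\in(0,1)$, to be chosen later, and set $\delta_j=\theta^j$ for $j\ge 1$. Let $U_0=(-\frac12,\frac12)^2$. Given the open uncovered set $U_{j-1}$, tile the plane by the grid of closed squares of side $\delta_j$ with vertices in $\delta_j\Z^2$. For every grid square contained in $U_{j-1}$, place in it the inscribed disk, of radius $\delta_j/2$ and occupying the fraction $\pi/4$ of the square. These disks are disjoint from each other and from all earlier disks. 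Then set $U_j=U_{j-1}\setminus\bigcup(\text{new closed disks})$, and write $a_j=|U_j|$.

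\textbf{Area recursion.} The grid squares that meet $U_{j-1}$ but are not contained in it all meet $\partial U_{j-1}$, so their total area is at most $C\delta_j(|\partial U_{j-1}|+\delta_j^{-1}\cdot\text{corner terms})$, which is $\lesssim \delta_j\,\mathcal H^1(\partial U_{j-1})+\delta_j$. Hence
$$a_j\le \left(1-\tfrac{\pi}{4}\right)a_{j-1}+C\delta_j\,\mathcal H^1(\partial U_{j-1}).$$
The key point is to bound the perimeter sharply. The number of generation-$i$ disks is at most $a_{i-1}/\delta_i^2$, since their squares lie in $U_{i-1}$. Their total perimeter is therefore at most $\pi a_{i-1}/\delta_i$. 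The boundary of the big square adds $4$. So
$$\mathcal H^1(\partial U_{j-1})\le 4+\pi\sum_{i<j}\frac{a_{i-1}}{\delta_i},$$
and consequently
$$\delta_j\,\mathcal H^1(\partial U_{j-1})\lesssim \theta^j+\sum_{i<j}a_{i-1}\theta^{j-i}.$$

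\textbf{Induction.} I would prove by induction that $a_j\le q^j$, with $q=1-\frac{\pi}{8}$, say. Under this hypothesis the error term is
$$\lesssim \theta^j+\sum_{i<j}q^{i-1}\theta^{j-i}\lesssim q^{j-1}\frac{\theta/q}{1-\theta/q},$$
up to the harmless $\theta^j\le q^{j}\theta$ term. Choosing $\theta$ small enough that this is at most $\frac{\pi}{8}q^{j-1}$ closes the induction:
$$a_j\le \left(1-\tfrac{\pi}{4}\right)q^{j-1}+\tfrac{\pi}{8}q^{j-1}=q^j.$$
This step is where I expect the main difficulty. A naive bound of the form $|\partial U|\lesssim 1/\delta_{j-1}$ gives an error that is a fixed constant rather than one proportional to $a_{j-1}$. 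That would force $\delta_{j}/\delta_{j-1}\approx a_{j-1}$, and the resulting radii would be $\eps^{\log(1/\eps)}$, which is superpolynomially small. Counting the generation-$i$ disks by the area $a_{i-1}$ available to them is exactly what makes the geometric sum comparable to $a_{j-1}$. The corner and near-boundary squares at the edges of $(-\frac12,\frac12)^2$ only contribute $O(\delta_j)=O(\theta^j)$, which is absorbed in the same way.

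\textbf{Conclusion.} Stop at the first $k$ with $q^k\le\eps$, so that $k\le \log(1/\eps)/\log(1/q)+1$. The union of the finitely many disks then has measure at least $1-\eps$. Every radius is at least
$$\frac{\delta_k}{2}=\frac{\theta^k}{2}\ge \eps^{A},\qquad A\approx\frac{\log(1/\theta)}{\log(1/q)},$$
for $\eps<\eps_0$. With explicit choices of $\theta$ and $q$, tracking the constants should give an exponent such as $22$; any fixed polynomial exponent suffices for the later application. Shrinking each closed disk slightly, or noting that the squares lie in the open set $U_{j-1}$, keeps the disks inside the open square and pairwise disjoint.
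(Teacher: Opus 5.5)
Your argument is correct. It uses the same multiscale mechanism as the Lieb--Lebowitz packing that the paper cites without proof: generations of disks with geometrically shrinking radii, each generation filling a fixed fraction of the area still uncovered. The uncovered area therefore decays geometrically while the smallest radius stays polynomial in $\eps$.

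The exponent $\log(0.1)/\log(0.9)\approx 21.85$ in the paper's remark corresponds to generations that shrink radii by a factor of about $10$ but remove only about a tenth of the remaining area. Your grid version is self-contained and removes nearly $\pi/4$ of the remaining area per step, so it gives a much better exponent. The step you flag as the crux is exactly right: counting generation-$i$ disks by the area $a_{i-1}$ available to them makes the boundary loss at step $j$ of order $\theta a_{j-2}$ rather than a constant.

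What you leave unchecked is the exponent $22$ that the statement actually asserts. Also, "any polynomial exponent suffices" is too generous. With $c-n\ge c^{0.99}$, the inner-product estimate in the proof of Theorem \ref{lower coherent} needs $c\,\eps^{2A+2}$ to be a positive power of $c$, that is, $A<49$.

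The bookkeeping is short.
\begin{itemize}
\item A closed $\delta_j$-square meeting a generation-$i$ circle of radius $\rho=\delta_i/2\ge\delta_j/(2\theta)$ meets $U_{j-1}$ only in the annulus $\rho<|z-z_C|\le\rho+\sqrt2\,\delta_j$. This annulus has area at most $(\sqrt2+2\theta)\delta_j\cdot2\pi\rho$.
\item Squares meeting $\partial U_0$ meet $U_0$ in a strip of area at most $4\sqrt2\,\delta_j$.
\end{itemize}
Hence
$$a_j\le\Bigl(1-\frac{\pi}{4}\Bigr)a_{j-1}+\frac{\pi}{4}\Bigl(6\theta^j+5\sum_{i<j}a_{i-1}\theta^{j-i}\Bigr).$$
Take $q=1-\pi/8$ and $\theta=1/30$. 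Then $6\theta+5\theta/(q-\theta)<\frac12$, which closes your induction, and $A=\log 30/\log(1/q)<7$. It follows that $\theta^k/2>\frac{\theta}{2}\eps^{A}\ge\eps^{22}$ for small $\eps$, which proves the lemma as stated. As a bonus, the threshold $1-\frac{1}{46}$ in that theorem would improve to $1-\frac{1}{16}$.
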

\begin{remark} In \cite{Lieb} the authors constructed such a family of disks explicitly and one can do simple algebra to check that their disks, if taken in the decreasing order of radiuses, give the claimed bound (in fact, their proof allows for any power larger than $\frac{\log(0.1)}{\log(0.9)}\approx 21.85$). On the other hand, some positive power of $\eps$ is unavoidable, because if the smallest radius is $r$ then the $\frac{r}{\sqrt{2}}$-vicinity of each of the corners is not covered by any disk, hence $r\le \sqrt{\eps}$.
\end{remark}
\begin{remark} A similar lemma was used in \cite{Kulikov}. However, there estimate on the smallest radius was not needed, simply the existence of any such disjoint finite union of disks was enough.
\end{remark}
As we mentioned in the introduction, we will obtain a lower bound on the eigenvalues $\mu_n(c)$ by the min-max principle. Since the Bargmann transform is an isometry from $L^2(\R)$ to $\Fo$ we will construct a subspace $V$ of $\Fo$ of dimension $n$ such that all functions $F\in V$ are mostly concentrated in the square $\left(-\frac{\sqrt{c}}{2},\frac{\sqrt{c}}{2}\right)^2$. The space that we will construct will be a linear span of the functions of the form $T_w\left(\sqrt{\frac{\pi^k}{k!}}z^k\right)$ for various $w\in\Cm, k\in\N_0$, which under the inverse Bargmann transform correspond to time-frequency shifts of Hermite functions, but we will not need this relation to Hermite functions in our proof.

Let $\theta = \sqrt[5]{\frac{n}{c}} < 1$ and $\eps = 1 - \theta > 0$. Note that $\eps\le\frac{c-n}{c}\le 5\eps$.  We can without loss of generality assume that $\eps < \eps_0$ from Lemma \ref{lem}. If $\eps \ge \eps_0$ we will pick the smallest $n_0$ such that $1-\sqrt[5]{\frac{n_0}{c}} < \eps_0$, the estimate for $n_0$ gives us that $1 > \mu_n(c) \ge \mu_{n_0}(c) > 1-\exp(-\delta c)$ for some absolute constant $\delta > 0$ depending only on $\eps_0$, so to get the estimate for $\mu_n(c)$ we can just decrease $\eta$ in Theorem \ref{lower coherent} to make it smaller than $\delta$.

We consider the set of disks provided by Lemma \ref{lem} for our $\eps$, so that their total area is at least $1-\eps = \theta$. Let disk $D_m$ have radius $r_m$ and centre $z_m\in \left(-\frac{1}{2},\frac{1}{2}\right)^2$ which we view as the complex number. We define a new family of disks $B_m$ with centres $w_m=\sqrt{c}\theta z_m$ and radiuses $R_m=\sqrt{c}\theta^2 r_m$. Note that since $\theta < 1$ these disks are still disjoint and their total area is at least $c\theta^5 = n$. Moreover, if $r$ is the smallest of $r_m$ (which we know is at least $\eps^{22}$) then the distance between any two of these disks is at least $2\sqrt{c}\theta r (1-\theta)$.

The set of functions that we will consider will be $$\mathcal{W}=\bigcup_{m=1}^N\left\{ T_{w_m}\sqrt{\frac{\pi^k}{k!}}z^k\mid 0 \le k \le \pi R_m^2\right\}.$$
Note that it has at least $n$ elements. We want to show that the functions from it have strong concentration to the square $\left(-\frac{\sqrt{c}}{2},\frac{\sqrt{c}}{2}\right)^2$ and that they are almost orthogonal. For this we will use the following estimate which essentially says that $\sqrt{\frac{\pi^k}{k!}}z^k$ has a Gaussian decay outside of a disk of area $k$.
\begin{proposition}
For all $k\in \N_0$ and all $R \ge 1$ we have
\begin{equation}\label{prop}
\int_{|z| > \sqrt{\frac{k}{\pi}}+R}\sqrt{\frac{\pi^k}{k!}}|z|^k e^{-\frac{\pi}{2}|z|^2}dz \le  (2+2\sqrt{k})e^{-\frac{\pi R^2}{2}}.
\end{equation}
\end{proposition}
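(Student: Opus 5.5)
Pass to polar coordinates, so the integral equals
$$2\pi\sqrt{\frac{\pi^k}{k!}}\int_{\rho_0+R}^\infty \rho^{k+1}e^{-\frac{\pi}{2}\rho^2}\,d\rho,\qquad \rho_0=\sqrt{\frac{k}{\pi}}.$$
The integrand $g(\rho)=\rho^{k+1}e^{-\frac{\pi}{2}\rho^2}$ is log-concave, with $\log g$ having derivative $\frac{k+1}{\rho}-\pi\rho$. I will compare $g(\rho)$ with its value at a reference point by integrating this derivative.

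First, I bound $\rho_0^{k}e^{-\frac{\pi}{2}\rho_0^2}$ against $\sqrt{k!/\pi^k}$. We have $\rho_0^{2k}e^{-\pi\rho_0^2}=(k/\pi)^ke^{-k}$, and Stirling in the form $k!\ge (k/e)^k$ gives
$$\sqrt{\frac{\pi^k}{k!}}\,\rho_0^ke^{-\frac{\pi}{2}\rho_0^2}\le 1.$$
This is the normalizing step, and it shows that the peak of $\sqrt{\pi^k/k!}\,|z|^ke^{-\frac{\pi}{2}|z|^2}$ is at most $1$.

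Next, for $\rho=\rho_0+s$ with $s\ge0$, I use the identity
$$\log\frac{\rho^ke^{-\frac{\pi}{2}\rho^2}}{\rho_0^ke^{-\frac{\pi}{2}\rho_0^2}}=\int_{\rho_0}^{\rho}\Bigl(\frac{k}{t}-\pi t\Bigr)dt.$$
Since $k/t-\pi t=\pi(\rho_0^2-t^2)/t\le -\pi(t-\rho_0)$ for $t\ge\rho_0$, the right-hand side is at most $-\frac{\pi}{2}s^2$. Hence
$$\sqrt{\frac{\pi^k}{k!}}\,\rho^ke^{-\frac{\pi}{2}\rho^2}\le e^{-\frac{\pi}{2}s^2}.$$
The integral is therefore bounded by
$$2\pi\int_R^\infty(\rho_0+s)e^{-\frac{\pi}{2}s^2}\,ds.$$
Using $\int_R^\infty s\,e^{-\frac{\pi}{2}s^2}ds=\frac{1}{\pi}e^{-\frac{\pi}{2}R^2}$ and, for $R\ge1$, the Mills-ratio bound $\int_R^\infty e^{-\frac{\pi}{2}s^2}ds\le\frac{1}{\pi R}e^{-\frac{\pi}{2}R^2}\le\frac{1}{\pi}e^{-\frac{\pi}{2}R^2}$, this becomes
$$\bigl(2+2\rho_0\bigr)e^{-\frac{\pi}{2}R^2}=\Bigl(2+2\sqrt{k/\pi}\Bigr)e^{-\frac{\pi}{2}R^2}\le(2+2\sqrt{k})e^{-\frac{\pi}{2}R^2}.$$
The case $k=0$ is included, since there $\rho_0=0$ and the same computation applies.

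The only delicate point is getting the constant right: I need the sharp Stirling direction $k!\ge(k/e)^k$, and I must keep the extra factor $\rho$ from the Jacobian as the linear weight $\rho_0+s$ rather than absorbing it into the exponent. The bound $\frac{\pi(\rho_0^2-t^2)}{t}\le-\pi(t-\rho_0)$ follows from $\frac{\rho_0+t}{t}\ge1$, so I expect no real obstacle there.
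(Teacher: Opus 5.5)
Your proof is correct and is essentially the paper's argument. Both get the pointwise bound $\sqrt{\pi^k/k!}\,|z|^k e^{-\frac{\pi}{2}|z|^2}\le e^{-\frac{\pi}{2}s^2}$ for $|z|=\sqrt{k/\pi}+s$ from $k!\ge (k/e)^k$; your integration of the log-derivative is the paper's use of $\log(1+x)\le x$ in another form. Both then pass to polar coordinates and use $R\ge 1$ to absorb the Jacobian's linear factor into $\int_R^\infty s\,e^{-\frac{\pi}{2}s^2}\,ds=\frac{1}{\pi}e^{-\frac{\pi}{2}R^2}$; your Mills-ratio step does the same job as the paper's estimate $\sqrt{k/\pi}\le s\sqrt{k/\pi}$ for $s\ge 1$.
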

\begin{proof}
Let us first assume that $k \ge 1$. We have for $r = \sqrt{\frac{k}{\pi}}+t$
\begin{align*}
k\log(r) - \frac{\pi}{2}r^2 = k\log\left(\sqrt{\frac{k}{\pi}}\right) + k\log\left(1+\sqrt{\frac{\pi}{k}}t\right) -\frac{\pi}{2} \left(\frac{k}{\pi} + 2\sqrt{\frac{k}{\pi}}t + t^2\right)\le \\ k\log\left(\sqrt{\frac{k}{\pi}}\right)  - \frac{k}{2} - \frac{\pi t^2}{2},
\end{align*}
where we used inequality $\log(1+x)\le x$ in the second step. For the factor $\sqrt{\frac{\pi^k}{k!}}$ we use a well-known inequality $k! \ge \left(\frac{k}{e}\right)^k$ to get for $|z| = \sqrt{\frac{k}{\pi}} + t$
$$\sqrt{\frac{\pi^k}{k!}}|z|^k e^{-\frac{\pi}{2}|z|^2} \le e^{-\frac{\pi t^2}{2}}.$$
Note that this estimate is true for $k = 0$ as well, so from now on we assume $k \ge 0$. We consider the integral on the left-hand side of \eqref{prop} in polar coordinates with this upper bound to get
$$2\pi \int_{R}^\infty \left(t+\sqrt{\frac{k}{\pi}}\right)e^{-\frac{\pi t^2}{2}}dt \le 2\pi \left(1+ \sqrt{\frac{k}{\pi}}\right)\int_R^\infty te^{-\frac{\pi t^2}{2}}dt = \left(2 + 2\sqrt{\frac{k}{\pi}}\right)e^{-\frac{\pi R^2}{2}},$$ 
where in the first step we used an estimate $\sqrt{\frac{k}{\pi}} \le t\sqrt{\frac{k}{\pi}}$ which is valid since $t\ge R \ge 1$. To get the bound \eqref{prop} it remains to note that $\pi \ge 1$.
\end{proof}

Now, we turn to the estimation of the inner products and concentration of our functions. We begin with the former. Let us pick indices $1\le m, l\le N$ and degrees $0 \le k \le \pi R_m^2$, $0\le s \le \pi R_l^2$. We want to bound 
$$\int_\Cm \left(T_{w_m}\sqrt{\frac{\pi^k}{k!}}z^k\right)\left( \overline{T_{w_l}\sqrt{\frac{\pi^s}{s!}}z^s}\right)e^{-\pi |z|^2}dz.$$
If $l = m$ then this integral is equal to $1$ for $k = s$ and it is equal to $0$ for $k\neq s$, which can be easily seen by switching to polar coordinates. For $l\neq m$ we put the absolute values under the integral sign. Since the distance between the disks $B_m$ and $B_l$ is at least $2\sqrt{c}\theta r(1-\theta)$, we can bound this integral from above by
$$\int_{|z-w_m|>R_m + \sqrt{c}\theta r(1-\theta)} + \int_{|z-w_l| > R_l + \sqrt{c}\theta r(1-\theta)}.$$
For the first one we will use the pointwise bound $\left|T_{w_l}\sqrt{\frac{\pi^s}{s!}}z^s\right|\le e^{\frac{\pi}{2}|z|^2}$ while for the second one we use the pointwise bound $\left|T_{w_m}\sqrt{\frac{\pi^k}{k!}}z^k\right| \le e^{\frac{\pi}{2}|z|^2}$. We get for the first integral
$$\int_{|z-w_m|>R_m + \sqrt{c}\theta r(1-\theta)}\left|T_{w_m}\sqrt{\frac{\pi^k}{k!}}z^k\right| e^{-\frac{\pi}{2}|z|^2}dz$$
and a similar expression for the other integral. Since $T_{w_m}$ is an isometry preserving the distribution of $|F(z)|e^{-\frac{\pi}{2}|z|^2}$, this integral is equal to 
$$\int_{|z| > R_m + \sqrt{c}\theta r(1-\theta)} \sqrt{\frac{\pi^k}{k!}}|z|^k e^{-\frac{\pi}{2}|z|^2}dz.$$
Since $R_m \ge \sqrt{\frac{k}{\pi}}$, by \eqref{prop} this is at most $(2+2\sqrt{k})e^{-\frac{\pi}{2} c \theta^2 r^2(1-\theta)^2}$ (here we assume that $\sqrt{c}\theta r(1-\theta)\ge 1$ which is true for big enough $c$ under our assumption $n \le c - c^{0.99}$ and will be seen from the following reasoning). Note that all our disks are definitely inside of the square $\left(-\frac{\sqrt{c}}{2},\frac{\sqrt{c}}{2}\right)^2$, so their areas are at most $c$, hence $k\le c$. We will bound $(2+2\sqrt{k})$ from above by $\frac{c}{2}$ for big enough $c$. We can also assume that $\theta \ge 0.99$ by decreasing $\eps_0$ if necessary (since $1-\theta = \eps < \eps_0$), so that $\theta^2 > \frac{2}{\pi}$. Combining everything, recalling that $1-\theta = \eps$ and $r > \eps^{22}$ we get
$$\left|\int_\Cm \left(T_{w_m}\sqrt{\frac{\pi^k}{k!}}z^k\right)\left( \overline{T_{w_l}\sqrt{\frac{\pi^s}{s!}}z^s}\right)e^{-\pi |z|^2}dz\right| \le ce^{-c\eps^{46}}.$$
Recall that $c-n \ge c^{0.99}$. Therefore, $\eps \ge \frac{c^{-0.01}}{5}$. Using this bound we get
$$\left|\int_\Cm \left(T_{w_m}\sqrt{\frac{\pi^k}{k!}}z^k\right)\left( \overline{T_{w_l}\sqrt{\frac{\pi^s}{s!}}z^s}\right)e^{-\pi |z|^2}dz\right| \le ce^{-5^{-46}c^{1-0.46}} \le \frac{1}{2c}$$
for big enough $c$ (here we also get that $\sqrt{c}\theta r(1-\theta)\ge 1$ for big enough $c$). This is the only place in our argument where the power $0.99$ plays a role. In fact, any power larger than $1-\frac{1}{46}$ would have worked.

Now, we turn to the estimation of the concentration. We are interested in
$$\int_{\Cm \backslash\left(-\frac{\sqrt{c}}{2},\frac{\sqrt{c}}{2}\right)^2}\left|T_{w_m}\sqrt{\frac{\pi^k}{k!}}z^k\right|^2 e^{-\pi |z|^2}dz.$$
We notice that the disk $B_m$ is inside of the square $\left(-\frac{\theta\sqrt{c}}{2},\frac{\theta\sqrt{c}}{2}\right)^2$, so it is at least $\frac{(1-\theta)\sqrt{c}}{2}$ away from $\Cm \backslash\left(-\frac{\sqrt{c}}{2},\frac{\sqrt{c}}{2}\right)^2$. Using the bound $\left|T_{w_m}\sqrt{\frac{\pi^k}{k!}}z^k\right| \le e^{\frac{\pi}{2}|z|^2}$ to remove the square and $e^{-\frac{\pi}{2}|z|^2}$ we can once again invoke \eqref{prop} to get that this integral is at most $ce^{-\frac{\pi \eps^2 c}{8}}$. We notice that $$\frac{\pi \eps^2 c}{8} \ge \frac{(c-n)^2}{100c} \ge \frac{(\sqrt{c}-\sqrt{n})^2}{100}.$$  Since $n-c\ge c^{0.99}$, we have $$ce^{-\frac{(\sqrt{c}-\sqrt{n})^2}{100}} \ge \frac{1}{2c}e^{-\frac{(\sqrt{c}-\sqrt{n})^2}{200}}$$
for big enough $c$ (in fact, here the estimate $c-n \ge 10^{100}\sqrt{c\log(c)}$ would have been enough).

It remains to combine all our estimates and apply the min-max principle. We pick a subset $\mathcal{V} = \{ F_1, F_2,\ldots , F_n\}$ of $\mathcal{W}$ consisting of exactly $n$ elements and let $V$ be their linear span in $\Fo$. We want to show that $\mathcal{V}$ is linearly independent and that for any $F\in V\backslash\{0\}$ we have
$$\frac{\int_{\left(-\frac{\sqrt{c}}{2},\frac{\sqrt{c}}{2}\right)^2} |F(z)|^2e^{-\pi |z|^2}dz}{\int_\Cm |F(z)|^2e^{-\pi |z|^2}dz} \ge 1-\exp(-\kappa(\sqrt{c}-\sqrt{n})^2).$$
By subtracting this from $1$, this is the same as
$$\frac{\int_{\Cm\backslash\left(-\frac{\sqrt{c}}{2},\frac{\sqrt{c}}{2}\right)^2} |F(z)|^2e^{-\pi |z|^2}dz}{\int_\Cm |F(z)|^2e^{-\pi |z|^2}dz} \le \exp(-\kappa(\sqrt{c}-\sqrt{n})^2).$$
Let $F = \sum_{k=1}^n c_k F_k$ for some $c_k\in \Cm$. We have
$$\|F\|^2_{\Fo} = \sum_{k=1}^n\sum_{l=1}^n c_k\bar{c_l} \langle F_k, F_l\rangle = \sum_{k=1}^n |c_k|^2 + \sum_{k\neq l} c_k\bar{c_l}\langle F_k, F_l\rangle.$$
In absolute value each of these inner products is at most $\frac{1}{2c}$, so we get
$$\left|\|F\|^2_{\Fo}-\sum_{k=1}^n |c_k|^2\right|\le \frac{1}{2c}\left(\sum_{k=1}^n |c_k|\right)^2 \le \frac{n}{2c} \left(\sum_{k=1}^n |c_k|^2\right) \le \frac{1}{2}\left(\sum_{k=1}^n |c_k|^2\right),$$
where in the second step we used Cauchy--Schwarz inequality (or inequality between arithmetic mean and quadratic mean).  This means that $\|F\|_{\Fo}^2\ge \frac{1}{2}\sum_{k=1}^n |c_k|^2$, in particular that $F_k$ are linearly independent. Now, for the integral over $\Cm\backslash \left(-\frac{\sqrt{c}}{2},\frac{\sqrt{c}}{2}\right)^2$, again using the inequalty $|\sum_{k=1}^n b_k|^2 \le c\sum_{k=1}^n |b_k|^2$ pointwise, we get for $\eta = \frac{1}{200}$
\begin{align*}\int_{\Cm\backslash\left(-\frac{\sqrt{c}}{2},\frac{\sqrt{c}}{2}\right)^2} |F(z)|^2e^{-\pi |z|^2}dz \le c\sum_{k=1}^n |c_k|^2\int_{\Cm\backslash\left(-\frac{\sqrt{c}}{2},\frac{\sqrt{c}}{2}\right)^2} |F_k(z)|^2e^{-\pi |z|^2}dz\le\\ \frac{1}{2}\exp(-\eta(\sqrt{c}-\sqrt{n})^2)\sum_{k=1}^n |c_k|^2.
\end{align*}
Thus, the ratio
$$\frac{\int_{\Cm\backslash\left(-\frac{\sqrt{c}}{2},\frac{\sqrt{c}}{2}\right)^2} |F(z)|^2e^{-\pi |z|^2}dz}{\int_\Cm |F(z)|^2e^{-\pi |z|^2}dz}$$
is at most $\exp(-\eta(\sqrt{c}-\sqrt{n})^2)$, as required.
\begin{remark}Both Theorem \ref{upper coherent} and Theorem \ref{lower coherent} are applicable to many more domains than just squares, for example for scalings of compact piecewise $C^1$-domains (much weaker conditions are admissible). Theorem \ref{upper coherent} requires finding a subdomain such that it takes big enough portion of the measure and the difference of the domains has small diameter, which can be achieved by looking at the tubular neighboorhood of the boundary, for example. For Theorem \ref{lower coherent} we also need to find a suitable subdomain, and then almost cover it by disks, which can either be done by mimicking the proof of Lemma \ref{lem} in \cite{Lieb} or by considering first the Whitney decomposition of the subdomain and then filling each of the squares of the decomposition using Lemma \ref{lem}.
\end{remark}

\section{Paley--Wiener space}
We will be working with the space $PW_\pi$ defined as the set of functions $f\in L^2(\R)$ such that $\supp \hat{f}\subset [-\frac{1}{2},\frac{1}{2}]$. By the Fourier inversion, functions from $PW_\pi$ are entire functions in the whole complex plane and they satisfy the pointwise bound for $x, y\in\R$
$$|f(x+iy)|\le \|f\|_{L^2(\R)}e^{\pi |y|},$$
in particular $f(z)e^{i\pi z}$ is bounded in the upper half-plane. Additionally, since $\widehat{f'}(x) = 2\pi i x \hat{f}(x)$ we also have the following bound  for $x\in \R$
$$|f'(x)|\le \frac{2\pi}{\sqrt{12}}\|f\|_{L^2(\R)} \le 2\|f\|_{L^2(\R)}.$$

Just like in the Bargmann--Segal--Fock space, shifts map $PW_\pi$ to itself, and here we do not even have to add any correcting factors: if $z_0\in \Cm$ and $f\in PW_\pi$ then $f(\cdot - z_0)\in PW_\pi$. However, the equality of norms $\|f\|_{L^2(\R)} = \|f(\cdot - z_0)\|_{L^2(\R)}$ only holds for $z_0\in\R$ (and this is the only case that we will use).

Lastly, we will use the fact that if we multiply two or more functions then on the Fourier transform side this corresponds to a convolution which can not make the support too large. Specifically, if $\supp \widehat{f_k}\subset [-r_k, r_k]$ for $k = 1, \ldots , N$ then 
$$\supp \F(f_1f_2\ldots f_N)\subset \left[-\sum_{k=1}^n r_k, \sum_{k=1}^n r_k\right].$$
\section{Proof of Theorem \ref{lower time}}
We begin by covering the case $c-n \le c^{7/8}$ which follows from the combination of Theorem \ref{Slepian} and Theorem \ref{Karnik}. If $c-n \le 10^{100}\log(c)$ then Theorem \ref{Slepian} tells us that $\lambda_n(c)$ is bounded away from $0$, hence by decreasing $\eta$ if needed be we can make sure that the estimate holds in this regime. For $10^{100}\log(c) \le c - n \le c^{7/8}$ we first of all notice that $\log(\frac{2c}{c-n})$ is at least $\frac{1}{8}\log(c)$, so the factor is still logarithmic in $c$. By Theorem \ref{Slepian} we know that for $\eps < \frac{1}{3}$, say, for big enough $c$ the point $n = [c]$ is inside the region $\{n: \eps < \lambda_n(c) < 1-\eps\}$, and since this set is clearly a set of consecutive integers (since $\lambda_n(c)$ is decreasing in $n$), and $n_c = [c]$ is in this region, if we consider $n = [c] - |\{n:\eps < \lambda_n(c) < 1-\eps\}|$ it will no longer be in this set, hence $\lambda_n(c) \ge 1-\eps$. By using the estimate for $|\{n:\eps < \lambda_n(c) < 1-\eps\}|$ from Theorem \ref{Karnik} for $\eps = \exp(-\frac{9\eta}{\log(c)}(c-n))$ we get the desired bound if $\eta$ is small enough in this regime. So, from now on we will assume that $c-n > c^{7/8}$. Lastly, we will also assume that $n \ge 0.99 c$ because otherwise we can just use monotonicity of $\lambda_n(c)$ and the case $n = [0.99c]+1$ like we did in the proof of Theorem \ref{lower coherent} since the bound is always exponential in $c$ (this regime is the only reason we need $2$ in $\log(\frac{2c}{c-n})$ so that the logarithm is at least $\log(2)>0$).

Since the eigenvalues depend only on the product of lengths of the intervals, we will from now on assume that $I = [-\frac{1}{2},\frac{1}{2}]$ and $J = [-\frac{c}{2},\frac{c}{2}]$. Put $r = 0.01 \frac{c-n}{\log(\frac{2c}{c-n})}$ and let us consider the Whitney decomposition of $J$ up to the size $r$: we set $J_k = [\frac{c}{2}-\frac{c}{2^{k}},\frac{c}{2}-\frac{c}{2^{k+1}}] = [L_k, R_k]$, $J_{-k} =-J_k$ for $1\le k\le l$ where $l$ is such that $\frac{c}{2}-r\in J_l$ so that $2^{l}r \le c \le 2^{l+1}r$. For each $k = 1, 2,\ldots , l$ put $\gamma_k = \left(1 - \frac{r2^{k-10}}{c}\right)^{-1} > 1$ and let $A_k = [L_k + c^{6/8}, R_k-c^{6/8}]\cap \gamma_k\Z$, $A_{-k} = -A_k$. First, we will count the total number of points in $\cup_{k=1}^l A_k$. We have
$$\sum_{k=1}^l |A_k| \ge \sum_{k=1}^l \left(\frac{c}{\gamma_k 2^{k+1}} - (2c^{6/8}+2) \right)= -l(2c^{6/8}+2)+\sum_{k=1}^l \frac{c}{2^{k+1}}\left(1-\frac{r2^{k-10}}{c}\right)=$$
$$\frac{c}{2} - \frac{c}{2^{l+1}}-l(2c^{6/8}+2+r2^{-11}) \ge \frac{c}{2}-r-\frac{lr}{200}\ge \frac{c}{2}-\frac{lr}{100},$$
%\end{align*}
where the first inequality uses that $|W\cap \gamma_k\Z|$ for an interval $W$ is at least $\frac{|W|}{\gamma_k}-2$ and the last two steps use that $r\ge 10^{10}c^{6/8}$ and $l\ge 200$ for big enough $c$ (here we use the assumption $c-n\ge c^{7/8}$).

We have $l \le 2\log(\frac{c}{r})$. Therefore $\frac{lr}{100c} \le \frac{1}{50}\frac{r}{c}\log(\frac{c}{r})$. Let $x = \frac{r}{c} < y = \frac{c-n}{c}< 0.1$. We know that 
$$x = 0.01\frac{y}{\log(\frac{2}{y})}.$$

Since $y$ is small, it is known that we can estimate $y\ge 10x\log(\frac{1}{x})\ge 20xl$. Thus, $$\frac{lr}{100} = \frac{cxl}{100}\le \frac{cy}{2}=\frac{c-n}{2}.$$

Therefore, the total number of points in $A=\bigcup_{k=1}^l A_k\cup A_{-k}$ is at least $2(\frac{c}{2}-\frac{c-n}{2}) = n$. Our goal is to associate to each $p\in A$ a function $f_p\in PW_\pi$ such that for all $p, q\in A$ we have $f_p(q)\approx \delta_{p,q}$ and all functions $f_p$ have strong concentration to the interval $[-\frac{c}{2},\frac{c}{2}]$. For $p\in A_k\cup A_{-k}$ the function $f_p$ will have the form
$$f_p(x) = \frac{\sin(\frac{\pi (x-p)}{\gamma_k})}{\frac{\pi (x-p)}{\gamma_k}}g_k(x-p)h_k(x-p),$$
where $h_k$ and $g_k$ are functions to be chosen such that $h_k(0)=g_k(0)=1$. The sine function is responsible for the exact biorthogonality property for $q\in A_k\cup A_{-k}$ so that $f_p(q)=\delta_{p,q}$. Function $g_k$ is responsible for almost biorthogonality for $q\in A\backslash (A_k\cup A_{-k})$. This will be done by making it be localized to the $c^{6/8}$ scale (this is the reason we cut this much from both sides of intervals $J_k$). However, since we will have a huge margin here, the function $g_k$ is by far the least important of the three. Finally, function $h_k$ will be responsible for the localization of $f_p$ to the interval $[-\frac{c}{2},\frac{c}{2}]$. Because the distance from any $p\in A_{k}\cup A_{-k}$ to $\R\backslash[-\frac{c}{2},\frac{c}{2}]$ is proportional to $\frac{c}{2^k}$ we will make $h_k$ be localized on this scale.

The most direct way to construct a function which is concentrated on some scale is to, ironically, take the first eigenfunction of the time-frequency localization operator and apply the estimates of Fuchs \cite{Fuc}. However, to keep this part of the text self-contained we will construct such a well-concentrated function explicitly.
\begin{proposition}\label{function prop}
Let $T, W>0$ be such that $TW \ge 1$. The function $$f_{T, W} =\F \left(e^{-\pi x^2 \frac{W}{T}}\chi_{[-\frac{T}{2},\frac{T}{2}]}(x)\right)$$ satisfies
$$\int_{|t| >\frac{W}{2}} |f_{T,W}(t)|^2dt \le \sqrt{\frac{T}{W}}10e^{-\frac{\pi TW}{2}},$$
$$\int_{\R} |f_{T,W}(t)|^2dt\le \sqrt{\frac{T}{W}},$$
$$f_{T, W}(0)\ge \frac{1}{10}\sqrt{\frac{T}{W}},$$
$$|f_{T,W}(t)|\le f_{T,W}(0), t\in\R.$$
\end{proposition}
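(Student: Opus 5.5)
We first reduce to a single parameter by scaling. Put $a = \frac{W}{T}$ and $u(x) = e^{-\pi a x^2}\chi_{[-\frac{T}{2},\frac{T}{2}]}(x)$, so that $f_{T,W} = \F u$. We then prove the four claims in order of difficulty: the pointwise ones, then the total norm, and finally the tail estimate, which is the substantial part.

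\textbf{Pointwise and norm bounds.} Since $u\ge 0$, we get $|f_{T,W}(t)| \le \int u = f_{T,W}(0)$ immediately. The function $u$ is even and real, so $f_{T,W}$ is real.

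For the lower bound on $f_{T,W}(0)$ we write
$$f_{T,W}(0)=\int_{-T/2}^{T/2} e^{-\pi a x^2}dx=\frac{1}{\sqrt{a}}\int_{|y|\le \sqrt{TW}/2}e^{-\pi y^2}dy.$$
Because $TW\ge 1$, the last integral is at least $\int_{|y|\le 1/2}e^{-\pi y^2}dy \ge e^{-\pi/4}>\frac{1}{10}$. This gives $f_{T,W}(0)\ge \frac{1}{10}\sqrt{T/W}$.

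By Plancherel, $\|f_{T,W}\|_2^2=\int u^2\le \int_\R e^{-2\pi a x^2}dx = (2a)^{-1/2}\le \sqrt{T/W}$.

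\textbf{Tail bound.} Here we decompose $u = e^{-\pi a x^2} - e^{-\pi a x^2}\chi_{|x|>T/2}$. The Fourier transform of the Gaussian part is explicit:
$$G(t)=a^{-1/2}e^{-\pi t^2/a}=\sqrt{T/W}\,e^{-\pi t^2 T/W}.$$
Its $L^2$ mass on $|t|>W/2$ is $\frac{T}{W}\int_{|t|>W/2}e^{-2\pi t^2T/W}dt$. Substituting $t=\frac{W}{2}+s$ and using $t^2\ge \frac{W^2}{4}+Ws$ bounds this by $\frac{T}{W}e^{-\pi TW/2}\cdot\frac{2}{2\pi T}\le \frac{1}{\pi W}e^{-\pi TW/2}$.

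The truncation error $E=\F(e^{-\pi a x^2}\chi_{|x|>T/2})$ is controlled on all of $\R$ by Plancherel:
$$\|E\|_2^2=\int_{|x|>T/2}e^{-2\pi a x^2}dx\le \frac{1}{2\pi a (T/2)}e^{-2\pi a T^2/4}=\frac{1}{\pi W}e^{-\pi TW/2}.$$
Note that the exponent $2\pi aT^2/4=\pi TW/2$ matches exactly.

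Combining via $\|G+E\|^2\le 2\|G\|^2+2\|E\|^2$ on $\{|t|>W/2\}$ gives a bound $\frac{4}{\pi W}e^{-\pi TW/2}$. It remains to compare $\frac{1}{W}$ with $\sqrt{T/W}$. This follows from $\frac{1}{W}=\sqrt{T/W}\cdot\frac{1}{\sqrt{TW}}\le\sqrt{T/W}$, since $TW\ge1$. So the tail is at most $\sqrt{T/W}\cdot 10 e^{-\pi TW/2}$ with room to spare.

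The main obstacle is arranging that both error pieces carry the same exponent $e^{-\pi TW/2}$. The Gaussian width $a=W/T$ is precisely the choice that balances the time truncation against the frequency tail, and once this is checked the rest is the routine Gaussian tail estimate $\int_R^\infty e^{-\alpha x^2}dx\le \frac{e^{-\alpha R^2}}{2\alpha R}$.
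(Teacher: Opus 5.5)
Your proof is correct and follows essentially the same route as the paper: you write the truncated Gaussian as the full Gaussian minus its tail, use Plancherel on the tail piece, apply the elementary Gaussian tail bound, and get the pointwise claim from positivity of $u$. The only cosmetic difference is that the paper first rescales to $T=W$, where $e^{-\pi x^2}$ is its own Fourier transform, whereas you keep $a=W/T$ throughout and absorb the scaling at the end via $\frac{1}{W}\le\sqrt{T/W}$.
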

\begin{proof}
First of all we notice that all of the estimates respect the linear change of variables $t\to qt$, so we can without loss of generality assume that $T = W\ge 1$. In this case, since $e^{-\pi x^2}$ is its own Fourier transform, we get
 $$f_{T,T} = \F \left(e^{-\pi x^2} \chi_{[-\frac{T}{2},\frac{T}{2}]}\right) = e^{-\pi x^2} - \F \left(e^{-\pi x^2} \chi_{\R\backslash [-\frac{T}{2},\frac{T}{2}]}\right).$$
This means, using the triangle inequality and the Plancherel theorem, that 
$$\int_{|t| > \frac{W}{2}} |f_{T, T}(t)|^2 dt \le 8\int_{\frac{T}{2}}^\infty e^{-2\pi x^2}dx.$$
This is the tail of the Gaussian for which very precise estimates are known, but for us a simple bound is sufficient:
$$\int_{\frac{T}{2}}^\infty e^{-2\pi x^2}dx \le \frac{2}{T}\int_{\frac{T}{2}}^\infty xe^{-2 \pi x^2}dx = \frac{1}{2\pi T}e^{-\frac{\pi T^2}{2}} \le e^{-\frac{\pi T^2}{2}},$$
where in the last step we used $T \ge 1$.

The second estimate is the easiest, as it follows directly from the Plancherel theorem:
$$\int_\R |f_{T,T}(t)|^2dt = \int_{-\frac{T}{2}}^{\frac{T}{2}} e^{-2\pi x^2}dx \le \int_\R e^{-2\pi x^2}dx =\frac{1}{\sqrt{2}} < 1.$$

For the value at $0$ we have
$$f_{T,T}(0) = \int_{-\frac{T}{2}}^{\frac{T}{2}}e^{-\pi x^2}dx \ge \int_{-\frac{1}{2}}^{\frac{1}{2}}e^{-\pi x^2}dx \ge e^{-\frac{\pi}{4}} \ge \frac{1}{e} > \frac{1}{10}.$$
Finally, the estimate $|f_{T,T}(t)|\le f_{T,T}(0)$ follows from the fact that the Fourier transform of $f_{T,T}$ is non-negative.
\end{proof}
\begin{remark}
Note that the exponential $e^{-\frac{\pi TW}{2}}$ corresponds to $e^{-\frac{\pi}{2}c}$, which is twice worse than the estimate of Fuchs which gives $e^{-\pi c}$.
\end{remark}
The last ingredient that we need is a way to get from the $L^2$-bound to the pointwise bound. Although for our explicit functions we can obtain explicit pointwise bounds, we will instead use the following lemma which interpolates between $L^2$-bound and $L^\infty$-bound for the derivative, in particular because it will also be used in the proof of the upper bound for the eigenvalues. 
\begin{lemma}\label{Kolm}
Let $\Gamma\subset \R$ be a ray (meaning one of the sets $[x_0,+\infty)$, $(x_0,+\infty)$, $(-\infty,x_0)$ or $(-\infty, x_0]$) and let $f:\Gamma\to\Cm$ be a $C^1$-function which is in $L^2(\Gamma)$. Then for all $t\in \Gamma$ we have
$$|f(t)|\le 3^{1/3} \|f\|_{L^2(\Gamma)}^{2/3}\sup_{x\in\Gamma} |f'(x)|^{1/3}.$$
\end{lemma}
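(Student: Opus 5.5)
We prove Lemma \ref{Kolm} with a one-sided interval argument. The derivative bound keeps $|f|$ large on an interval next to $t$, and that interval lies in $\Gamma$, so the $L^2$-norm controls how large $|f(t)|$ can be.

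\emph{Setup.} Put $M = \sup_{x\in\Gamma}|f'(x)|$ and $a = |f(t)|$. If $M=\infty$ there is nothing to prove. If $M = 0$, then $f$ is constant on $\Gamma$; since $f\in L^2(\Gamma)$ and $\Gamma$ has infinite length, $f\equiv 0$ and the claim holds. So assume $0<M<\infty$ and $a>0$.

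\emph{Choice of interval.} Since $\Gamma$ is a ray, it contains a segment of any length starting at $t$ and going in one fixed direction: to the right of $t$ if $\Gamma = [x_0,+\infty)$ or $(x_0,+\infty)$, and to the left if $\Gamma$ is a left ray. Take this segment of length $\ell = a/M$, and let $s$ denote the distance from $t$ along it. By the mean value inequality,
$$|f(t\pm s)|\ge a - Ms \quad\text{for } 0\le s\le \ell,$$
and the right-hand side is nonnegative on this range.

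\emph{Integration.} Squaring and integrating over the segment gives
$$\|f\|_{L^2(\Gamma)}^2 \ge \int_0^{a/M}(a-Ms)^2\,ds = \frac{a^3}{3M}.$$
Rearranging,
$$a^3 \le 3M\|f\|_{L^2(\Gamma)}^2,$$
which is exactly
$$|f(t)|\le 3^{1/3}\|f\|_{L^2(\Gamma)}^{2/3}M^{1/3}.$$

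The only point needing care is that the segment must stay inside $\Gamma$. This holds because $\ell = a/M$ is finite and $\Gamma$ is unbounded in the chosen direction. That is also why the lemma is stated for rays rather than bounded intervals: a ray always leaves room for the full segment, so no further obstacle arises.
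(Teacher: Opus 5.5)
Your proof is correct and is essentially the paper's argument. Both proofs bound $|f|$ from below by $|f(t)|-C|x-t|$ on the segment of length $|f(t)|/C$ inside $\Gamma$, then integrate the square to get $|f(t)|^3/(3C)\le \|f\|_{L^2(\Gamma)}^2$; your explicit treatment of the degenerate cases $M=0$, $M=\infty$ and $f(t)=0$, which the paper leaves implicit, is a harmless addition.
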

\begin{proof}
We will consider only the case $\Gamma = [x_0,+\infty)$, the other three cases are completely analogous. Let $C = \sup_{x\in\Gamma} |f'(x)|$ and consider the interval $U=\left[t, t+\frac{|f(t)|}{C}\right]\subset\Gamma$. For $x\in U$ we have by the fundamental theorem of calculus and the triangle inequality that $$|f(x)|\ge |f(t)| - C(x-t).$$
Squaring this estimate (note that the right-hand side is non-negative on $U$) and integrating it over $U$ we get
$$\int_\Gamma |f(x)|^2dx \ge \int_U |f(x)|^2dx \ge \int_t^{t+\frac{|f(t)|}{C}} (|f(t)|-C(x-t))^2dx=\frac{|f(t)|^3}{3C},$$
which gives the desired estimate after multiplying by $3C$ and taking the cube root.
\end{proof}
\begin{remark}This lemma is in fact a special case of the Landau--Kolmogorov inequality. More general version of this inequality was used by the author in \cite{KulikovFI} where the estimates on the eigenvalues $\lambda_n(c)$ were applied to the questions of Fourier interpolation.
\end{remark}
Now, we are finally in position to choose functions $g_k$ and $h_k$. Put $\delta_k = \frac{1-\gamma_k^{-1}}{2} = \frac{r2^{k-11}}{c}$. Note that $\delta_k \ge \delta_1 \ge c^{-2/8}$ for big enough $c$. We set $$g_k(x) = \frac{f_{\delta_k, c^{6/8}}(x)}{f_{\delta_k, c^{6/8}}(0)},\quad h_k = \frac{f_{\delta_k, \frac{c}{2^k}}(x)}{f_{\delta_k, \frac{c}{2^k}}(0)}.$$
First of all we note that $g_k(0) = h_k(0) = 1$ and $\supp \widehat{g}_k = \supp \widehat{h}_k = [-\frac{\delta_k}{4},\frac{\delta_k}{4}]$, so that the support of the Fourier transform of $f_p$ is contained in $[-\frac{1}{2},\frac{1}{2}]$. Observe also that $\delta_k \frac{c}{2^k} \ge \delta_k r \ge \delta_k c^{6/8} \ge c^{1/2}$, so for big enough $c$ we can apply Proposition \ref{function prop} to $g_k$ and $h_k$.

We begin with estimation of $f_p(q)$ for $q\in A\backslash (A_k\cup A_{-k})$. The first factor and the third factor are at most $1$ in absolute value so we will ignore them. For the second factor, since $|q-p|\ge c^{6/8}$, we can apply Proposition \ref{function prop} and Lemma \ref{Kolm} to $g_k(q-p)$ to get
$$|g_k(q-p)| \le \frac{1}{f_{\delta_k, c^{6/8}}(0)}3^{1/3}\left(\sqrt{\frac{\delta_k}{c^{6/8}}}10e^{-\frac{\pi \delta_k c^{6/8}}{2}}\right)^{2/3}\sup_{x\in \R}|f'_{\delta_k, c^{6/8}}(x)|^{1/3}.$$
The derivative we can estimate from above by crudely saying that the Fourier transform of $f_{\delta_k, c^{6/8}}$ is contained in $[-\frac{1}{2},\frac{1}{2}]$, so we can estimate it through the $L^2$-norm of the function while the value at $0$ we estimate from below using Proposition \ref{function prop}. Combining everything with bounds $1 \ge \delta_k \ge c^{-2/8}$ we can see that all the terms except the exponential will together give us at most $c^{10}$ for big enough $c$. So, we get
$$|g_k(q-p)|\le c^{10} e^{-\frac{\pi \delta_k c^{6/8}}{3} }\le c^{10}e^{-\delta_kc^{6/8}} \le c^{10}e^{-c^{1/2}} \le c^{-10}$$
for big enough $c$.

Finally, we need to estimate the $L^2$-norm of $f_p$ outside of $[-\frac{c}{2},\frac{c}{2}]$. For this we instead estimate the first two factors in the definition of $f_p$ by $1$ and only focus on the third factor. We notice that the distance from $A_k\cup A_{-k}$ to $\R\backslash [-\frac{c}{2},\frac{c}{2}]$ is at least $\frac{c}{2^{k+1}}$, which is exactly the radius to which the function $f_{\delta_k, \frac{c}{2^k}}$ is concentrated. So, we can apply Proposition \ref{function prop} and get
$$\int_{\R\backslash [-\frac{c}{2},\frac{c}{2}]} |f_p(t)|^2dt \le 100 \frac{\frac{c}{2^k}}{\delta_k} \sqrt{\frac{\delta_k}{\frac{c}{2^k}}} 10 e^{-\frac{\pi \delta_k\frac{c}{2^k}}{2}}\le c^{10}e^{-\frac{r}{2^{11}}}\le c^{-10}e^{-\frac{r}{10000}},$$
where we crucially used that $\delta_k\frac{c}{2^k} = \frac{r}{2^{11}}$ and that $r\ge 10^{100}\log(c)$ (which corresponds to $c-n\ge 10^{100}\log^2(c)$).

It remains to apply the min-max principle to the linear span of our functions. Let us pick $B=\{p_1, p_2, \ldots , p_n\}\subset A$ of size $n$ and let $V$ be the linear span of $f_{p_k}, k = 1,\ldots , n$. Let us take any linear combination $f = \sum_{k=1}^n c_kf_{p_k}$ and let $|c_m|$ be the biggest among all $|c_k|$. Then 
$$|f(p_m)| = |c_m - \sum_{k\neq m} c_k f_{p_k}(p_m)| \ge |c_m|(1 - \sum_{k\neq m} |f_{p_k}(p_m)|)\ge |c_m|(1-c c^{-10}) \ge \frac{|c_m|}{2},$$
therefore by the pointwise bound $\|f\|_{L^2(\R)}\ge \frac{|c_m|}{2}$, in particular $\dim V = n$. Next, just like in the proof of Theorem \ref{lower coherent} we wish to estimate the ratio
$$\frac{\int_{\R\backslash[-\frac{c}{2},\frac{c}{2}]} |f(t)|^2dt}{\|f\|^2_{L^2(\R)}}.$$
We already estimated the denominator from below. For the numerator we simply use triangle inequality to get that it is at most
$$n\sum_{k=1}^n |c_k|^2\int_{\R\backslash[-\frac{c}{2},\frac{c}{2}]} |f_{p_k}(t)|^2dt \le c^{-8}e^{-\frac{r}{10000}}\max_{k} |c_k|^2.$$
Combining everything, we get
$$\frac{\int_{\R\backslash[-\frac{c}{2},\frac{c}{2}]} |f(t)|^2dt}{\|f\|^2_{L^2(\R)}} \le e^{-\frac{r}{10000}},$$
so $\lambda_n(c) \ge 1 - e^{-\frac{r}{10000}}$. It remains to recall that $r = 0.01 \frac{c-n}{\log(\frac{2c}{c-n})}$ so we get the desired estimate.
\begin{remark}
The only place where we needed a strong bound like $c-n\ge c^{7/8}$ is in the estimation of $f_p(q)$ for $p\neq q$, the concentration part only required $c-n\ge 10^{100}\log^2(c)$. By more careful analysis this part will work if $c-n \ge c^{1/2 + \eps}$ but we were unable to break this barrier with a simple explicit construction. However, using the theory of entire functions of exponential type we can replace the first factor in the definition of $f_p$  so that $f_p(q) = \delta_{p, q}$ not only for $q\in A_k$ but also for $q\in A_{k-1}$ and $q\in A_{k+1}$, and for other $q$'s it can be checked that $c-n\ge 10^{100}\log^2(c)$ is enough. However, since we will anyway have to cover the $0 < c-n < 10^{100}\log^2(c)$ regime with Theorem \ref{Slepian} and Theorem \ref{Karnik} we decided to use them for a larger range of $n$'s so that for the remaining $n$'s our construction is completely explicit.
\end{remark}
\begin{remark}
We can make a similar argument to prove Theorem \ref{lower coherent} (with a slightly worse constant than $0.99$) by putting onto each disk shifts of partial products of the Weierstrass sigma function divided by one zero instead of shifts of monomials, and then look at the values of linear combinations at the corresponding points, but this requires way more work and gives a worse result so we do not present it.
\end{remark}
\section{Proof of Theorem \ref{upper time}}\label{UT}
First of all we will once again assume that $I = [-\frac{1}{2},\frac{1}{2}]$ and $J = [-\frac{c}{2},\frac{c}{2}]$ by scaling. We begin by noting that the eigenfunctions of $S_{I,J}$ with positive eigenvalues are necessarily supported on $I$. In particular, their Fourier transforms are defined pointwise. We are going to take a normalized linear combination $f=\sum_{k=1}^n c_k f_k$ of the eigenfunctions such that $\hat{f}(z) = 0, z\in A$ for some set $A\subset \R$ of size at most $n-1$. This set $A$ will be similar to the set constructed in the previous section, but instead of taking all the constants being small and positive we will instead make then big and positive, so that the total number of points in $A$ is slightly less than $n$ and not more. Lastly, we will for now assume that $c-n \le 10^{-1000}c$ and will cover the remaining $n$'s at the end of the proof.

Put $r = 10^{100}\frac{c-n}{\log(\frac{2c}{c-n})}$ and consider the Whitney decomposition of $J$ up to the size r:  we set $J_k = [\frac{c}{2}-\frac{c}{2^{k}},\frac{c}{2}-\frac{c}{2^{k+1}}] = [L_k, R_k]$, $J_{-k} =-J_k$ for $1\le k\le l$ where $l$ is such that $\frac{c}{2}-r\in J_l$ so that $2^{l}r \le c \le 2^{l+1}r$. By the above assumption $l\ge 10$. For each $k = 1, 2,\ldots , l$ put $\gamma_k = \left(1-\frac{r2^{k-10}}{c}\right)^{-1}>1$ and let $A_k = [L_k, R_k]\cap \gamma_k\Z$ and $A_{-k} = -A_k$. Note that we are no longer cutting away segments of size $c^{6/8}$ from the ends of $[L_k,R_k]$ because otherwise we will not get strong enough concentration on the borders between two such intervals. We have
$$\sum_{k=1}^l |A_k| \le \sum_{k=1}^l \frac{c}{\gamma_k 2^{k+1}} + 2 = 2l + \sum_{k=1}^l \frac{c}{2^{k+1}}\left(1-\frac{r2^{k-10}}{c}\right) = 2l+\frac{c}{2} - \frac{c}{2^{l+1}}-\frac{lr}{2^{11}},$$
where in the first step we used that $|A_k| \le \frac{R_k-L_k}{\gamma_k}+2$. Therefore, for $A = \bigcup_{k=1}^l A_k\cup A_{-k}$ we have
$$|A| < 4l + c - \frac{lr}{2^{10}}.$$

We want to show that $|A| < n$. For this it is enough to establish
$$\frac{lr}{2^{10}}\ge 4l + (c-n).$$

Recall that $c-n \ge \log^2(c)$. Therefore in particular $r \ge 10^{100} \log(c)\ge 10^{100}$. Hence, the term $4l$ on the right-hand side is easily covered by a half of the term on the left-hand side. For the $c-n$ term we want to show that
$$\frac{10^{100}}{2^{11}}\frac{l(c-n)}{\log(\frac{2c}{c-n})}\ge c-n$$
which is equivalent to
$$\frac{10^{100}}{2^{11}}l \ge \log\left(\frac{2c}{c-n}\right).$$
We have $$2l\ge l +1 \ge \frac{1}{2}\log\left(\frac{c}{r}\right) = \frac{1}{2}\log\left(\frac{\frac{c}{c-n}}{\log(\frac{2c}{c-n})}\right).$$

As in the previous section, put $y = \frac{c-n}{c} < 10^{-1000}$. Then the desired inequality takes form
$$\frac{10^{100}}{2^{13}}\log\left(\frac{1}y{\log\left(\frac{2}{y}\right)}\right) \ge \log\left(\frac{2}{y}\right)$$
which holds for $y < 0.01$.

So, $|A| < n$ and we can take a linear combination $f$ of the first $n$ eigenfunctions for which $\|f\|_{L^2(\R)}=1$ and $\hat{f}(z)=0,z\in A$. For brevity, denote $g=\hat{f}\in PW_\pi$. By the min-max principle we have
$$\int_{|x| > \frac{c}{2}}|g(x)|^2dx \le 1-\lambda_n(c).$$

Therefore, by Lemma \ref{Kolm} and the estimate for the derivative of the function $g$ on $\R$ we get
$$|g(x)|\le 10(1-\lambda_n(c))^{1/3}, |x| > \frac{c}{2}.$$

Our next goal is to obtain from this an estimate on $|g(x+iy)|$ for $x\in [-\frac{c}{2},\frac{c}{2}]$ whenever $\min\left(|x-\frac{c}{2}|,|x+\frac{c}{2}|\right)< 10|y|$ which is better than the uniform estimate $e^{\pi |y|}$ that is valid for all $x,y\in\R$. We will only consider the case $y > 0, x > 0$, the other three cases will be completely analogous. 

Recall that $g(z)e^{i\pi z}$ is a bounded analytic function in the upper half-plane, therefore the logarithm of its absolute value is subharmonic there and the value at any point $x+iy, y > 0$ is bounded from above by the Poisson integral:
$$\log |g(x+iy)e^{-\pi y}|\le \frac{1}{\pi}\int_\R \frac{y}{(t-x)^2+y^2}\log |g(t)|dt.$$

For $t \le \frac{c}{2}$ we estimate $\log |g(t)|$ from above by $0$ and for $t > \frac{c}{2}$ we estimate $\log |g(t)|$ from above by our pointwise bound and get
$$\log |g(x+iy)| \le \pi y + \log(10(1-\lambda_n(c)^{1/3})\frac{1}{\pi}\int_{\frac{c}{2}}^\infty \frac{y}{(t-x)^2+y^2}dt.$$

For $ |x-\frac{c}{2}| < 10 |y|$ it is not hard to see that the integral is at least $10^{-10}$ by looking only at $t$ with $\frac{c}{2}\le t \le \frac{c}{2} + y$. Therefore, we get the following estimate on $|g(x+iy)|$ for $x\in [-\frac{c}{2},\frac{c}{2}]$ and $\min\left(|x-\frac{c}{2}|,|x+\frac{c}{2}|\right)< 10|y|$:
$$\log |g(x+iy)| \le \pi |y| + 10^{-10}\log(10(1-\lambda_n(c))^{1/3}).$$

Now, we want to use this bound together with the knowledge that $g(z) = 0, z\in A$ to obtain an estimate for $g(x), x\in [-\frac{c}{2},\frac{c}{2}]$. There are two cases to consider: $x\in J_k\cup J_{-k}$ for some $1\le k < l$ or $x\in [\frac{c}{2}-\frac{c}{2^{l}}, \frac{c}{2}]\cup [-\frac{c}{2}, -\frac{c}{2}+\frac{c}{2^{l}}]$. We begin with the easier second case, for which the fact about the zeroes is not needed. It will be similar to an argument used in the proof of Theorem \ref{upper coherent}.

Let $x\in  [\frac{c}{2}-\frac{c}{2^{l}}, \frac{c}{2}]$ (the other case is completely analogous). Consider a circle with centre $x$ and radius $r$. Since $\log |g(z)|$ is subharmonic, the integral of it over the circle is at least $\log |g(x)|$:
$$\log |g(x)| \le \int_0^1 \log |g(x + re^{2\pi i t})|dt.$$

Since $|\frac{c}{2}-x| \le 4r$, for the top and bottom quarters of the circle our stronger pointwise bound on $|g(a+ib)|$ applies, while for other points we will use a bound $|g(a+ib)| \le e^{\pi |b|}$. Plugging this in we get
\begin{align*}\log |g(x)| \le \frac{1}{2} 10^{-10} \log(10(1-\lambda_n(c))^{1/3} + \int_0^1 \pi r |\sin(2\pi t)| dt\\ =\frac{1}{2}10^{-10}\log(10(1-\lambda_n(c))^{1/3} +2r.
\end{align*}

If we assume that $\lambda_n(c) \ge 1 - \exp\left(-\kappa\frac{c-n}{\log(\frac{2c}{c-n})}\right)$ for big enough $\kappa$ then this estimate implies in particular that $\log |g(x)| \le -r$ (recall that we know that $r$ is big enough, so we can absorb all the constants).

Next, we turn to the case $x\in J_k\cup J_{-k}$ for some $1 \le k < l$. Once again, we will only do the case $x\in J_k = [L_k, R_k]$, the other being completely analogous. We consider a circle with centre $x$ and radius $R=\frac{c}{2^{k+2}}$. Here, just subharmonicty of $\log |g(z)|$ would be insufficient, we will use the full power of Jensen's formula:
$$\log |g(x)| = \int_0^1 \log |g(x+Re^{2\pi i t})|dt + \sum_{|x-z_m| < R} \log \frac{|x-z_m|}{R},$$
where the sum is over the zeroes $z_m$ of $g$.	For the integral term by doing the same computation as in the previous case we get
$$\int_0^1 \log |g(x+Re^{2\pi i t})|dt \le 2R + \frac{1}{2}10^{-10}\log(10(1-\lambda_n(c))).$$

For the sum over the zeroes we will leave only $z_m\in A$, this clearly can make the sum only bigger. We will also remove the zeroes for which $|x-z_m| < 1$. Note that the remaining zeroes lie in $A_{k-1}\cup A_k\cup A_{k+1}$, and the spacing between them is at most $\gamma_{k+1} < 2$ except possibly for four zeroes on the boundaries of the intervals $J_{k-1}, J_k, J_{k+1}$ (if $k = 1$ then instead of $A_0, J_0$ we have to consider $A_{-1}, J_{-1}$, but the same reasoning holds). Each of these four missing points costs us at most $\log(R)\le \log(c)$. The remaining zeroes we split into the ones bigger and smaller than $x$ and estimate each of the sums by the integral of $\log(\frac{t}{R})$ from $0$ to $R-2\gamma_{k+1}$ since the function $\log(s)$ is increasing. Moreover, adding at most four extra points (each of which costs us at most an extra $\log(c)$) we can also take the integral all the way up to $R$. We get
\begin{align*}\sum_{|x-z_k| < R} \log \frac{|x-z_k|}{R} \le 8\log(c)+\frac{2}{\gamma_{k+1}}\int_0^{R}\log\left(\frac{t}{R}\right)dt=8\log(c)-\frac{2R}{\gamma_{k+1}}\\=8\log(c)-2R+\frac{Rr2^{k-9}}{c}=8\log(c)-2R+\frac{r}{2^{11}}.
\end{align*}

Combining everything we get
$$\log |g(x)| \le 8\log(c) + \frac{1}{2}10^{-10}\log(10(1-\lambda_n(c))) + \frac{r}{2^{11}}.$$

Just like before, if we assume that $\lambda_n(c) \ge 1 - \exp\left(-\kappa\frac{c-n}{\log(\frac{2c}{c-n})}\right)$ for big enough $\kappa$ and $c -n \ge \log^2(c)$ so that $r \ge \frac{\log(c)}{4}$ then this estimate gives us that $\log |g(x)| \le -r$.

In the case $c -n\ge 10^{-1000}c$ which we left out in the beginning we can ignore any zeroes of the function $g$ and just do the integral computation for all $x\in [-\frac{c}{2},\frac{c}{2}]$ by taking the circle with centre $x$ and radius $\frac{c}{2}$ and get the same bound $\log |g(x)|\le -r$ if $\lambda_n(c) \ge 1 - \exp\left(-\kappa\frac{c-n}{\log(\frac{2c}{c-n})}\right)$ for big enough $\kappa$.

To arrive at a contradiction we note that if $\lambda_n(c) \ge 1 - \exp\left(-\kappa\frac{c-n}{\log(\frac{2c}{c-n})}\right)$ for big enough $\kappa$ then clearly $\lambda_n(c) \ge \frac{1}{2}$ if $c-n\ge \log^2(c)$ and $c$ is big enough. Therefore,
$$\int_{-\frac{c}{2}}^{\frac{c}{2}}|g(x)|^2dx \ge \frac{1}{2}.$$
On the other hand, from our estimate we get
$$\int_{-\frac{c}{2}}^{\frac{c}{2}}|g(x)|^2dx \le \int_{-\frac{c}{2}}^{\frac{c}{2}}e^{-2r} \le ce^{-2r}$$
which is less than $\frac{1}{2}$ since $c-n\ge \log^2(c)$ and $r\ge 10\frac{c-n}{\log(c)}$.
\begin{remark}\label{UC}
We can try doing a similar argument in the coherent state transform case by constructing an appropriate subset of the plane and looking at the linear combination of eigenfunctions vanishing at all of them. This approach actually gives the correct order $1-\exp(-\kappa(\sqrt{c}-\sqrt{n})^2)$ but for a worse region of $n$, on the order of $c-n\ge c^{0.9}$. Because of this and since this argument is way more technical than the proof of Theorem \ref{upper coherent} we do not present it.
\end{remark}
\section{Concluding remarks}
In this work we established the asymptotic behaviour for the quantities $1-\lambda_n(c)$ and $1-\mu_n(c)$ for $n < c$. Of course, this still leaves many things to be done. Probably the most ambitious one is to obtain integral approximations to $\log(1-\lambda_n(c))$ and $\log(1-\mu_n(c))$ for $n < c$ with a small error term in the spirit of Theorem \ref{Bonami}. In particular, it would be interesting to find for $0 < \eps < 1$ the limits
$$\gamma_\eps = -\lim_{c\to\infty} \frac{\log(1-\lambda_{(1-\eps)c}(c))}{c}, \delta_\eps =-\lim_{c\to\infty} \frac{\log(1-\mu_{(1-\eps)c}(c))}{c}.$$
From the present work we know that, if they exist, for small $\eps > 0$ they should behave like $\gamma_\eps \sim \frac{\eps}{\log(1/\eps)}$ and $\mu_\eps \sim \eps^2$, but their precise values likely involve some integrals with special functions.

Another interesting problem is to extend the range of $n$ for which our results apply. The natural boundaries are $c-n\sim \log(c)$ in the time-frequency localization case and $c-n\sim \sqrt{c}$ in the coherent state transform case, when the eigenvalues become bounded away from both $1$ and $0$. However, we were able to get this close only in Theorem \ref{lower time} with the help of Theorem \ref{Karnik}, in all other results we have to at the very least compensate for the polynomial in $c$ losses, leading to worse ranges in $n$.

Lastly, let us say a few words about what happens in the case $n > c$. In the time-frequency localization case extremely sharp estimates are provided by Theorem \ref{Slepian}, Theorem \ref{Karnik} and Theorem \ref{Bonami}, but it is worth mentioning that the methods of Section~\ref{UT} can give an upper bound on the eigenvalues for $c+10^{100}\log^2(c) < n < 10^{100}c$ which has the same asymptotic behaviour as $\log(\lambda_n(c))$ that we get in Theorem \ref{Bonami}. 

In the coherent state transform case these methods can also give an upper bound of expected magnitude for the region $c + c^{0.9} < n < 10c$, see Remark \ref{UC} (this can be extended to all $n \ge 10c$ as well by comparing with the case of a disk). For the lower bound, acting similarly to Section \ref{UPC}, but instead taking a bigger square and using its eigenfunctions for the min-max procedure, we can obtain lower bounds on $\mu_n(c)$ of expected magnitude for $n > c + \sqrt{c\log(c)}$. The details of these computations will appear elsewhere.
%{\color{red} in conclusion explicit constants, asymptotic instead of bounds (at least $n = (1-\delta)c$), $n > c$ case + covering by the disk (in the introduction or somewhere mention the decay for the disk), zeroes of eigenfunctions?, breaking the $\log^2$ barrier}
\subsection*{Acknowledgments} 
I am grateful to Fedor Nazarov for his help with the time-frequency localization operator. I also would like to thank Aleksandr Logunov for discussions on harmonic measure and logarithmic integrals, Jan Philip Solovej for telling me about the paper \cite{Lieb} and Mikhail Sodin for helpful comments about Jensen's formula. This work was supported by BSF Grant 2020019, ISF Grant 1288/21, and by The Raymond and Beverly Sackler Post-Doctoral Scholarship and by the VILLUM Centre of Excellence for the Mathematics of Quantum Theory (QMATH) with Grant No.10059.

\end{document}